\newtheorem{theorem}{Theorem}[section]
\numberwithin{equation}{section}
\theoremstyle{plain}
\newtheorem*{theorem*}{Theorem}
\newtheorem{proposition}[theorem]{Proposition}
\newtheorem{lemma}[theorem]{Lemma}
\theoremstyle{definition}
\newtheorem{definition}[theorem]{Definition}
\newtheorem{remark}[theorem]{Remark}
\theoremstyle{remark}
\newcommand{\new}{\newcommand}
\newcommand{\ren}{\renewcommand}
\new{\extraskip}{\bigskip} 
\new{\extranewpage}{\newpage}
\new{\extravspace}{\vspace}
\renewcommand{\AA}{\mathbb{A}}
\newcommand{\FF}{\mathbb{F}}
\newcommand{\PP}{\mathbb{P}}
\newcommand{\ZZ}{\mathbb{Z}}
\newcommand  {\shF}     {\mathcal{F}}
\newcommand  {\shL}     {\mathcal{L}}
\newcommand  {\shS}     {\mathcal{S}}
\newcommand  {\fom}     {\mathfrak{m}}
\newcommand  {\fop}     {\mathfrak{p}}
\newcommand  {\Ext}     {\operatorname{Ext}}
\newcommand  {\modu}     {\operatorname{mod}}
\newcommand  {\nor}     {{\operatorname{nor}}}
\renewcommand{\O}       {\mathcal{O}}
\newcommand  {\ord}     {\operatorname{ord}}
\newcommand  {\Proj}    {\operatorname{Proj}}
\newcommand  {\Spec}    {\operatorname{Spec}}
\newcommand  {\Syz}     {\operatorname{Syz}}
\newcommand{\comdots}{ , \ldots , }
\newcommand{\cal}[1]{\mathcal{#1}}
\newcommand{\longto}{\longrightarrow}
\renewcommand{\phi}{\varphi}        
\renewcommand{\epsilon}{\varepsilon}
\newcommand{\tensor}{\otimes}
\newcommand{\set}[1]{\left\{#1\right\}}
\renewcommand{\to}[1][]{\xrightarrow{\ #1\ }}
\ren{\sec}[1]{{ {#1}'}}
\newcommand{\elef}{{f}}
\newcommand{\eleg}{{g}}
\newcommand{\elek}{{k}}
\newcommand{\elel}{{l}}
\newcommand{\elem}{{m}}
\newcommand{\elen}{{n}}
\newcommand{\eler}{{r}}
\newcommand{\elet}{{t}}
\newcommand{\eleu}{{u}}
\newcommand{\eletest}{{z}} 
\ren{\elef}{t} 
\ren{\eleg}{s}
\ren{\elel}{t} 
\ren{\elem}{s} 
\ren{\elen}{t}
\ren{\elet}{y}
\newcommand{\expox}{{r}}
\newcommand{\expoy}{{s}}
\newcommand{\ind}{{\kappa}}
\newcommand{\indsec}{{j}}
\newcommand{\indi}{i}
\newcommand{\indj}{j}
\newcommand{\Frob}{\Phi} 
\newcommand{\expoe}{{e}}
\newcommand{\algforc}{B}
\newcommand{\modra}{\mu}
\newcommand{\submodra}{\nu}
\newcommand{\spax}{X}
\newcommand{\alg}{{A}}
\newcommand{\ringr}{R}
\newcommand{\shinv}{\shL}
\newcommand{\covmap}{\varphi}
\newcommand{\covmapcov}{\covmap_i: X_i \to X}
\new{\idbla}{{H}}
\new{\idblasec}{{G}}
\new{\idblatri}{{E}}
\newcommand{\spay}{Y}
\newcommand{\spac}{{C}}
\newcommand{\point}{{P}}
\newcommand{\open}{{U}}
\new{\normcomp}{{|}}
\new{\gen}{{g}}  
\new{\classres}{{u}} 
\new{\classcoho}{{c}}
\new{\homcon}{{\delta}} 
\new{\ringnorm}{{\ring^\nor}}
\new{\spaxnorm}{{\spay}}
\new{\linel}{{L}}
\newcommand{\field}{{K}}
\new{\cloalg}{\overline}
\newcommand{\fieldres}{{\kappa}}
\newcommand{\ring}{{R}}
\newcommand{\ringsec}{{S}}
\newcommand{\fuf}{{f}}
\newcommand{\var}{z}
\newcommand{\varx}{z}
\newcommand{\vary}{w}
\newcommand{\varz}{v}
\newcommand{\varu}{u}
\newcommand{\varv}{v}
\newcommand{\varw}{w}
\newcommand{\degm}{{m}}
\newcommand{\expofu}{{n}}
\new{\expoideal}{{d}}
\new{\varcoef}{{t}}  
\new{\fucoef}{{g}}
\new{\indcoef}{{\nu}}
\new{\vargen}{{z}}
\newcommand{\ideal}{{I}}
\newcommand{\idealsec}{{J}}
\newcommand{\numgen}{{n}} 
\newcommand{\indseq}{{n}}  
\new{\fuarb}{{g}}
\newcommand{\varfu}{q}
\newcommand{\multexp}{{\gamma}}
\newcommand{\monom}{{\var^\multexp}}
\newcommand{\indaxisrun}{{\iota}}
\ren{\set}{{S}}
\ren{\modra}{{m}}
\ren{\submodra}{{\numgen}}
\ren{\expofu}{{k}}
\ren{\indaxisrun}{{i}}
\ren{\varfu}{{w}}
\ren{\indseq}{{m}}
\renewcommand{\field}{{K}}
\renewcommand{\varu}{{U}}
\renewcommand{\varx}{{x}}
\renewcommand{\vary}{{y}}
\renewcommand{\varz}{{z}}
\newcommand{\genus}{{g}}
\newcommand{\sysmult}{{M}}
\renewcommand{\point}{{\fop}}
\ren{\expox}{{i}}
\ren{\expoy}{{j}}
\new{\poly}{{P}}
\new{\polycu}{{\varz^4+\varx \vary \varz^2 + \varx^3 \varz + \vary^3 \varz}}
\new{\polycubra}{{\varz^4+(\varx \vary) \varz^2 + (\varx^3 + \vary^3) \varz}}
\new{\fieldclo}{{L}}
\new{\ringgen}{{R}} 
\new{\vartrans}{{t}}
\new{\equat}{{g}}
\new{\equa}{{\equat}}
\new{\equatrans}{{\equat_\vartrans}}
\new{\varxyz}{{\varx,\vary,\varz}}
\new{\ringpoly}{{\fieldclo[\varxyz]}}
\new{\extratrans}{{\vartrans \varx^2 \vary^2}}
\new{\idealcu}{{(\varx^4,\vary^4,\varz^4)}}
\new{\ele}{{f}}
\new{\elecu}{{\vary^3\varz^3}}
\ren{\sysmult}{{S}}
\new{\sysmultcu}{{\fieldclo [\vartrans]   -\{0\}}}
\new{\fieldquot}{{Q}}
\new{\po}{{Q}}
\new{\pof}{{q}}
\new{\fourpo}{{4 \po}}
\new{\idealgenfourpo}{{\varx^{\fourpo}, \vary^{\fourpo},
\varz^{\fourpo}}}
\new{\expx}{{i}}
\new{\expy}{{j}}
\new{\expz}{{k}}
\ren{\alg}{{\alpha}}
\new{\equalg}{{\equat_\alg}}
\new{\degalg}{{m}}
\new{\ringo}{{\cal O}}
\new{\maxfropof}{{(\varx^\pof,\vary^\pof, \varz^\pof) }}
\new{\elerep}{{v}}
\new{\testcu}{{\varx \vary}}
\new{\varyz}{{\vary \varz}}
\ren{\monom}{{\varx^\expx \vary^\expy \poly^\elek}}
\new{\monomvar}{{\varx^\expx \vary^\expy \poly^\povar}}
\new{\coef}{{c}}
\new{\ringglobal}{{A}}
\new{\polyr}{{R}}
\new{\triple}{{(\expx, \expy, \elek)}}
\new{\polys}{{S}}
\new{\povar}{{q_0}}
\ren{\triple}{{(\expx, \expy, \elek)}}
\new{\triplevar}{{(\expx, \expy, \povar)}}
\new{\subw}{{W}}
\new{\subwsp}{{W_0}}
\new{\subd}{{D}}
\new{\varxex}{{\varx^\expx}}
\new{\varyex}{{\vary^\expy}}
\new{\varzex}{{\varz^\expz}}
\new{\subwpri}{{\subw'}}
\new{\subn}{{N}}
\new{\subx}{{X}}
\new{\suby}{{Y}}
\new{\neqz}{{\neq 0}}
\new{\spacequot}{{\subw/\subwsp}}
\new{\spacequotpri}{{\subwpri/\equa \subwsp}}
\new{\spacequotmap}{{\spacequot \to \spacequotpri}}
\new{\base}{{E_\expox}}
\new{\basec}{{E_\elek}}
\new{\baspri}{{F_\expox}}
\new{\pofsp}{{q_0}}
\new{\spaxtoy}{{\spax\to \spay}}
\new{\varyxz}{{\vary,\varx,\varz}}
\new{\matrixent}{{m_{\indi, \indj}}}
\new{\matrixdet}{{(m_{\indi, \indj})}}
\new{\indij}{{\indi,\indj}}
\new{\coefb}{{b}}
\new{\ordk}{{k}}
\new{\idmaxgenbrafropof}{{(\varx^\pof, \vary^\pof, \varz^\pof)}}
\new{\elealg}{{\alg}}
\new{\elesec}{{\beta}}
\new{\degele}{{m}}
\new{\ringext}{{T}}
\ren{\field}{{F}}
\new{\unitfield}{{\field^\times}}
\new{\numprim}{{p}}
\new{\idealtest}{{\tau}}
\new{\degequa}{{d}}
\new{\degclass}{{k}}
\new{\idealmax}{{\fom}}
\new{\Frobit}{{\Frob^\expoit}}
\new{\Frobitpb}{{\Frob^{\expoit*}}}
\new{\expoit}{{e}}
\new{\ringone}{D}
\new{\ringoneunit}{\ringone^\times}
\new{\idmax}{\fom}
\new{\elnum}{{h}}
\new{\ringtrans}{\ring_\vartrans}
\new{\monomtri}{{\varxex \varyex \varzex}}
\new{\froposec}{{\tilde{q}}}
\new{\expo}{{m}}
\new{\qup}{{q'}}
\new{\pointgen}{{\eta}}
\new{\ringglobalsec}{{B}}
\new{\elesys}{{h}}
\begin{document}

\title[Tight closure does not commute with localization]
{Tight closure does not commute with localization}

\author[Holger Brenner]{Holger Brenner}
\address{Fachbereich Mathematik und Informatik, Universit\"at Osnabr\"uck,
Albrechtstr. 28a, 49069 Osnabr\"uck, Germany}

\author[Paul Monsky]{Paul Monsky}
\address{Mathematics Department, MS 050,
Brandeis University,
Waltham, MA 02254-9110}

\email{hbrenner@uni-osnabrueck.de, monsky@brandeis.edu}

\begin{abstract}
We give an example showing that tight closure does not commute with
localization.
\end{abstract}

\maketitle

\noindent Mathematical Subject Classification (2000): 13A35, 14H60

\section*{Introduction}
{\ren{\po}{{q}}
\ren{\eletest}{{t}}

At the outset of chapter 12 of \cite{hunekeapplication},
Huneke declares:
``This chapter is devoted to the most frustrating
problem in the theory of tight closure. From the first day it was
clearly an important problem to know that tight closure commutes
with localization.''

The reason for Huneke's frustration in establishing the result is now
clear. It is not always true, and our paper provides the first
counterexample.

We recall the notion of tight closure, which was
introduced by M. Hochster and C.
Huneke some twenty years ago and is now an important tool in commutative algebra
(see \cite{hochsterhunekebriancon}, \cite{hunekeapplication}, \cite{hunekeparameter}).
Suppose that $\ring$ is a commutative noetherian domain containing a field
of positive characteristic
$\numprim >0$. Then the \emph{tight
closure} of an ideal $\ideal$ is defined to be
$$\ideal^*= \{\ele \in \ring:
\mbox{ there exists } \eletest
\neq 0 \mbox{ such that } \eletest \ele^\po \in \ideal^{[\po]}
\mbox{ for all } \po=\numprim^\expoe \}\,.$$
Here $\ideal^{[\po]} = (\ele^\po:\ele \in \ideal)$ is the ideal generated
by all $\ele^\po$, $\ele$ in $\ideal$.
The localization problem is the
following: suppose that $\sysmult \subseteq \ring$ is a
multiplicative system and $\ideal$ is an ideal of $\ring$.
Is
$(\sysmult^{-1} \ideal)^* = \sysmult^{-1}(\ideal^*)$?
That $\sysmult^{-1}(\ideal^*)$ is contained in $(\sysmult^{-1} \ideal)^*$
is trivial; the other inclusion is the problem.
The question is: if
$\ele$ is in $( \sysmult^{-1} \ideal)^*$ must there be an $\elesys \in \sysmult$ with
$\elesys \ele $ in $\ideal^*$?

Various positive results for the localization problem are mentioned in chapter 12 of \cite{hunekeapplication} (see also \cite{smithbinomial} and \cite{hochsterhuneketestexponent} for further positive results).
One attack that has had successes is through plus closure.
This approach works when the tight closure of $\ideal$ coincides with its
plus closure - that is to say when for each $\ele \in \ideal^*$
there is a finite domain extension $\ringext$ of $\ring$ for which
$\ele$ is in $\ideal\ringext$.
Tight closure is plus closure for parameter ideals in an excellent
domain \cite{smithparameter} and for graded
$\ring_+$-primary ideals in a two-dimensional standard-graded domain over
a finite field \cite{brennertightplus}.

In this paper we give an example of a three-dimensional
normal hypersurface domain in characteristic two, together with an
ideal $\ideal$, an element $\ele$ and a multiplicative system
$\sysmult$, such that $\ele$ is in $(\sysmult^{-1}\ideal)^*$, but $\ele$ is
not in
$ \sysmult^{-1}(\ideal^*)$.
This implies also that $\ele \not\in (\sysmult^{-1}\ideal)^+$,
so also Hochster's ``tantalizing question''
(see \cite[Remarks after Theorem 3.1]{hochstertightsolid}) whether tight closure is
plus closure has a negative answer.

Our example has no direct bearing on the question whether weakly
$F$-regular rings are $F$-regular. Recall that a noetherian ring of positive
characteristic is \emph{weakly $F$-regular}, if all ideals are
tightly closed ($\ideal= \ideal^*$), and \emph{$F$-regular}, if this
holds for all localizations. These notions have deep connections to
concepts of singularities (like log-terminal, etc.) defined in
characteristic zero in terms of the resolutions of the
singularities; see \cite{hunekeparameter}, \cite[Chapter 4]{hunekeapplication} or \cite{smithvanishingsingularities} for these relations. The equivalence of $F$-regular and weakly $F$-regular is known in the Gorenstein case, in the graded case \cite{lyubezniksmithgraded} and over an uncountable field (Theorem of Murthy, see \cite[Theorem 12.2]{hunekeapplication}).
It is still possible that tight closure always commutes with localization at a single element,
namely that $(\ideal_\fuf)^* = (\ideal^*)_\fuf$, and that for algebras of finite type over a finite
field we always have $\ideal^*= \ideal^+$.

Our argument rests on a close study of the homogeneous co-ordinate
rings of certain smooth plane curves and has the following history.
The first serious doubts that tight closure might not be plus closure
arose in the work of the first author in the case of a
standard-graded domain of dimension two. Both closures coincide
under the condition that the base field is finite, but this
condition seemed essential to the proof; this suggested looking at a
family
$\Spec \ringglobal \to \AA^1_{\FF_\numprim}$
of two-dimensional rings parametrized by the affine line.
The generic fiber ring is then a localization of the three
dimensional ring $\ringglobal$ and the generic fiber is defined over
the field of rational functions $\FF_\numprim (\vartrans)$, whereas
the special fibers are defined over varying finite fields. If
localization held, and if an element belonged to the tight closure
of an ideal in the generic fiber ring, then this would also hold in
almost all special fiber rings (Proposition \ref{deform}).
Since the fiber rings are graded of dimension two, one may
use the geometric interpretation of tight closure in terms of vector
bundles on the corresponding projective curves to study tight
closure in these rings.

Now in \cite{monskyhilbertkunzpointquartic} the second author had used
elementary methods to work out the Hilbert-Kunz theory of
$\ring_\alg = \field[\varx,\vary,\varz]/(\equa_\alg)$, where
${\rm char} \field = 2$, and
$$\equa =\equa_\alg = \poly + \alpha \varx^2\vary^2 \,\,\,\mbox{ with }\,\,\, \poly
=\varz^4+ \varx\vary\varz^2 + \varx^3\varz +
\vary^3\varz$$ and $ \alg$ in $\unitfield$.
In particular \cite{monskyhilbertkunzpointquartic} shows that the
Hilbert-Kunz multiplicity
of $\equa$ is $3$ when $\alg$ is transcendental over $\ZZ/(2)$ and
$>3$ otherwise.

\ren{\po}{{Q}}

Seeing these results, the first author realized that the family
$\Spec \field[\varx,\vary,\varz, \vartrans]/(\equa_\vartrans) \to \Spec \field[\vartrans]$
might be a good place to look for a counterexample to the
localization question. The candidate that arose was the ideal
$$\ideal = (\varx^4,\vary^4,\varz^4)\, \, \, \mbox{ and the element } \, \, \,\fuf =\vary^3
\varz^3 \, .$$
It followed directly from the results of \cite{monskyhilbertkunzpointquartic}
that $\fuf \in \ideal^*$ holds in $\ring_\alg$ when $\alg$ is
transcendental (Theorem \ref{inclusiontrans}).
The first author established, using an ampleness criterion due to Hartshorne and Mumford,
that every element of degree $\geq 2$ was a test element in
$\ring_\alg$
(Theorem \ref{testideal} via Lemma \ref{frobeniusinjective}; we later discovered that the argument could be simplified \-- see Remark \ref{remarkextra}). Computer
experiments showed that
$\varx\vary  (\vary^3 \varz^3)^\po \not\in \ideal^{[\po]}$ in $\ring_\alg$, where
$\alg $ is algebraic over $\ZZ/(2)$ of not too large degree,
and $\po$ is a power of $2$ depending on algebraic properties of $\alg$.

The second author built on \cite{monskyhilbertkunzpointquartic} to establish
that this non-inclusion holds in fact for arbitrary algebraic
elements, completing the proof. The argument is presented in section
\ref{noninclusion}. Section \ref{sectionremark} consists of remarks
and open questions. A more elementary but less revealing variant of our presentation is given in \cite{monskyfailure}.

\ren{\field}{{F}}

We thank A. Kaid (University of Sheffield), M. Kreuzer and D. Heldt (both Universit\"at Dortmund)
for their support in the computations with the computer-algebra
system CoCoA which provided strong numerical evidence in an earlier
stage of this work. We also thank the referees for their useful comments.}

\ren{\eletest}{{z}}

\section{Geometric deformations of tight closure}
\label{preparations}

Before we present our example we describe a special case of the localization problem,
namely the question of how tight closure behaves under geometric
deformations.
Let $\field$ be a field of positive characteristic and
$\field[\vartrans] \subseteq \ringglobal$, where $\ringglobal$ is a
domain of finite type. Suppose that an ideal $\ideal$ and an element
$\ele$ are given in $\ringglobal$. Then for every point $\point \in \Spec \field[\vartrans]=\AA^1_\field$
with residue class field $\fieldres(\point)$ one can consider the
(extended) ideal $\ideal$ and $\ele$ in $\ringglobal
\tensor_{\field[\vartrans]}\fieldres(\point)$ and one can ask
whether
$\ele \in \ideal^*$ in the co-ordinate ring of the fiber over
$\point$.
We call such a situation a
\emph{geometric} or \emph{equicharacteristic deformation} of tight closure
(for arithmetic deformations see Remark \ref{arithremark}).
The following proposition shows that if tight
closure commutes with localization, then also tight closure behaves
uniformly under such geometric deformations.

\begin{proposition}
\label{deform}
Let $\field$ be a field of positive characteristic,
let $\ringone \subseteq \ringglobal$ be domains of finite type over
$\field$ and suppose that $\ringone$ is one-dimensional. Let
$\ideal$ be an ideal and $\ele$ an element in $\ringglobal$.
Suppose that $\ele \in \ideal^* $ in the generic fiber ring
${\ringoneunit}^{-1}\ringglobal$. Assume that tight closure commutes with localization. Then
$\ele \in \ideal^* $ holds also in the fiber rings
$\ringglobal \tensor_\ringone \ringone/\idmax$ for almost all
maximal ideals $\idmax$ of $\ringone$.
\end{proposition}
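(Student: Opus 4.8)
The plan is to reduce the statement about fibers over closed points to the hypothesis over the generic point by a standard spreading-out argument, together with the assumption that tight closure commutes with localization. First I would use the localization hypothesis to pass from the generic fiber ring ${\ringoneunit}^{-1}\ringglobal$ to $\ringglobal$ itself: the hypothesis $\ele \in \ideal^*$ in ${\ringoneunit}^{-1}\ringglobal = \sysmult^{-1}\ringglobal$ (with $\sysmult = \ringoneunit$) gives, by the equality $(\sysmult^{-1}\ideal)^* = \sysmult^{-1}(\ideal^*)$, an element $\eles \in \sysmult = \ringone \setminus \{0\}$ with $\eles\ele \in \ideal^*$ in $\ringglobal$. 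So there is a nonzero test element $\eletest \in \ringglobal$ with $\eletest(\eles\ele)^\po \in \ideal^{[\po]}$ for all $\po = \numprim^\expoe$.

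Next I would spread this out over the base. Each containment $\eletest(\eles\ele)^\po \in \ideal^{[\po]}$ is a finite list of equations in $\ringglobal$; but we need them uniformly, not one prime $\po$ at a time. The standard device is a \emph{test exponent}–free reformulation: fix a single nonzero $\eletest$ and view the condition ``$\eletest(\eles\ele)^\po \in \ideal^{[\po]}$ for all $\po$'' as saying that the image of $\eletest \eles^\po \ele^\po$ vanishes in $\ringglobal/\ideal^{[\po]}$ for every $\po$. Since $\ringone$ is one-dimensional of finite type over $\field$, after inverting one more nonzero element $\eles' \in \ringone$ we may assume $\ringglobal$ is free (or at least flat) over the localized base $\ringone' = (\eles')^{-1}\ringone$, and similarly that $\ringglobal/\ideal$, $\ringglobal/\ideal^{[\po]}$ behave well in families — more precisely, that formation of $\ideal^{[\po]}$ commutes with the base change $\ringone' \to \ringone'/\idmax$ for all maximal ideals $\idmax$ of $\ringone'$, and that $\eletest$ remains nonzero (hence a nonzerodivisor, as $\ringglobal$ is a domain) in each fiber $\ringglobal \tensor_{\ringone} \ringone/\idmax$. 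This is where the real work sits: one must check that $\ideal^{[\po]} \tensor_{\ringone'} \kappa(\idmax)$ equals the Frobenius power of $\ideal\tensor_{\ringone'}\kappa(\idmax)$ in the fiber ring, which follows from generic flatness applied to the finitely many modules $\ringglobal/\ideal^{[\po]}$ — but here $\po$ ranges over infinitely many values, so one instead argues that the $\po$-th Frobenius power is the pullback of the $\numprim$-th power under the (relative) Frobenius, reducing everything to finitely many flatness statements about $\ringglobal/\ideal^{[\numprim]}$ and the Frobenius on the family.

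Granting that, I would conclude as follows: for almost all maximal ideals $\idmax$ of $\ringone$ (namely those avoiding $\eles$, $\eles'$, and the finitely many bad primes from generic flatness), base-changing the relations $\eletest \eles^\po \ele^\po \in \ideal^{[\po]}$ to $\ringglobal \tensor_\ringone \ringone/\idmax$ yields $\bar\eletest \,\bar\eles^{\,\po}\,\bar\ele^\po \in \ideal^{[\po]}$ in the fiber ring, where $\bar\eletest \ne 0$ and $\bar\eles \in \kappa(\idmax)$ is a nonzero \emph{scalar}, hence a unit. Thus $\bar\eletest' \bar\ele^\po \in \ideal^{[\po]}$ for all $\po$ with $\bar\eletest' = \bar\eletest\,\bar\eles^{\,\po}$; to get a single test element independent of $\po$, note $\bar\eles$ lies in the field $\kappa(\idmax)$, and since $\bar\eles^\po$ is just $\bar\eles$ raised to a $\numprim$-power we may absorb it: $\bar\eletest \bar\eles^{\,\po} = (\bar\eletest^{\,1/\numprim^{\expoe}}\bar\eles)^{\po}$ is not available, so instead simply use that $\bar\eles$ is a unit and rewrite $\bar\eletest \bar\eles^\po \bar\ele^\po \in \ideal^{[\po]}$ as $\bar\eletest (\bar\eles \bar\ele)^\po \in \ideal^{[\po]}$, i.e.\ $\bar\eles\bar\ele \in \ideal^*$; but $\bar\eles$ is a unit in the fiber ring, so $\bar\ele \in \ideal^*$ there. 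This gives $\ele \in \ideal^*$ in $\ringglobal \tensor_\ringone \ringone/\idmax$ for all but finitely many $\idmax$, as claimed. The main obstacle is the uniform spreading-out of the infinitely many Frobenius-power containments; everything else is formal manipulation of the definition of tight closure and elementary commutative algebra.
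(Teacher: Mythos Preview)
Your first step matches the paper: the localization hypothesis yields some $s \in D \setminus \{0\}$ with $sf \in I^*$ in $A$. From there, however, the paper finishes in one stroke by citing \emph{persistence of tight closure} (Huneke, Theorem~2.3): for the ring map $\varphi \colon A \to A \otimes_D D/\mathfrak{m}$ one gets $\varphi(s)\varphi(f) \in (\varphi(I))^*$ in every fiber, and since $s$ lies in only finitely many maximal ideals of the one-dimensional ring $D$, the image $\varphi(s)$ is a unit for almost all $\mathfrak{m}$.

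Your hands-on route introduces a phantom difficulty and overlooks a real one. The phantom: there is no obstacle to passing the infinitely many containments $z(sf)^q \in I^{[q]}$ to a fiber. An ideal containment in $A$ maps to one in any quotient, and Frobenius powers commute with ring homomorphisms, so $I^{[q]}\cdot(A/\mathfrak{m}A) = \bar I^{\,[q]}$ on the nose; no generic flatness, no spreading out, and no reduction to a single $q$ is needed. The real gap is your claim that $\bar z$ is ``nonzero (hence a nonzerodivisor, as $A$ is a domain)'' in the fiber: the fiber $A/\mathfrak{m}A$ need not be a domain, so $\bar z \ne 0$ does not place $\bar z$ in the complement of the minimal primes, which is what the definition of tight closure requires of the multiplier. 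This can be repaired for almost all $\mathfrak{m}$ with additional argument, but that argument is exactly the content the persistence theorem packages up. Invoking persistence, as the paper does, is both shorter and avoids the issue entirely.
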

\begin{proof}
Suppose that $\ele \in \ideal^*$ in ${\ringoneunit}^{-1}\ringglobal$.
If tight
closure localizes, then there exists an element $\elesys \in
\sysmult={\ringoneunit}$ such that $\elesys \ele \in \ideal^*$ holds in
$\ringglobal$. By the persistence of tight closure \cite[Theorem 2.3]{hunekeapplication}
(applied to
$\varphi: \ringglobal \to \ringglobal \tensor_\ringone \ringone/\idmax$)
we have for every maximal ideal $\idmax$ of $\ringone$ that
$\varphi(\elesys)\varphi(\ele) \in (\varphi(\ideal) )^*$
holds in $\ringglobal \tensor_\ringone \ringone/\idmax$.
Since $\elesys$ is contained in only finitely many maximal ideals of $\ringone$, it
follows that $\varphi(\elesys)$ is a unit for almost all maximal
ideals, and so $\varphi(\ele) \in (\varphi(\ideal) )^*$ for almost
all maximal ideals.
\end{proof}

We will apply Proposition \ref{deform} in the situation where $\ringone=\field[\vartrans]$
and $\ringglobal=\field[\vartrans, \varxyz]/(\equa)$, where $\equa$
is homogeneous with respect to $\varxyz$, but depends also on
$\vartrans$. The fiber rings are then two-dimensional homogeneous
algebras $\ring_{\kappa(\point)}$, indexed by $\point \in \AA_\field^1=\Spec
\field[\vartrans]$. If $\field $ is algebraically closed, then these
points correspond to certain values $\alg \in \field$ or to the
generic point $(0)$. In the two-dimensional graded situation we know
much more about tight closure than in general, since we can work
with the theory of vector bundles on the corresponding projective
curve (see \cite{brennertightproj}, \cite{brennerslope},
\cite{brennertightplus}).
In order to establish a counterexample to the localization problem
via
Proposition
\ref{deform} we have to find an ideal
$\ideal$ and an element $\ele$ in $\ringglobal$ such that
$\ele \in \ideal^*$ in $\ring_{\field(\vartrans)}$, but such that
$\ele \not\in \ideal^*$ in $\ring_{\alg}$ for infinitely many algebraic values
$\alg \in \field$. The first part is comparatively easy, the second
part is more difficult and uses also the following result on test
elements (Theorem \ref{testideal}).
Recall (see \cite[Chapter 2]{hunekeapplication}) that an element $\eletest \in \ring$ is a \emph{test element}
for tight closure if for all ideals $\ideal$ and elements $\ele$ we
have that $\ele \in \ideal^*$ if and only if $ \eletest \ele^\pof
\in \ideal^{[\pof]}$ for all $\pof=\numprim^\expoe$.

\renewcommand{\varx}{{x}}
\renewcommand{\vary}{{y}}
\renewcommand{\varz}{{z}}

\begin{lemma}
\label{frobeniusinjective}
Let $\ring = \field [\varx,\vary,\varz]/(\equa)$ be a normal
homogeneous two-dimensional hypersurface ring over an algebraically
closed field
$\field$ of positive characteristic $\numprim$, $\degequa=
\deg(\equa)$.
Suppose that $\numprim > \degequa -3$ and let $\spac = \Proj \ring$
be the corresponding smooth projective curve
of genus $\genus(\spac)$. Then a cohomology class
$\classcoho\neq 0$ in $H^1(\spac , \O_\spac (\degclass))$ for $\degclass <0$
can not be annihilated by any iteration, $\Frobit$, of the absolute
Frobenius morphism $\Phi: \spac \to \spac$.
\end{lemma}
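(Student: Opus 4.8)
The plan is to realise the action of the iterated Frobenius $\Frobitpb$ on $H^1(\spac,\O_\spac(\degclass))$ as a connecting homomorphism coming from the Cartier sequence, and then to kill the relevant group of global sections using the numerical hypothesis $\numprim>\degequa-3$.

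First I would recall that on the smooth curve $\spac$ there are short exact sequences
$$0 \lra \O_\spac \lra \Phi_*\O_\spac \lra \shB \lra 0 \qquad\text{and}\qquad 0 \lra \shB \lra \Phi_*\Omega^1_\spac \lra \Omega^1_\spac \lra 0 ,$$
where $\shB:=\Phi_*\O_\spac/\O_\spac$ is the sheaf of locally exact differentials: the first map in the left-hand sequence is $a\mapsto a^{\numprim}$, and the exterior derivative embeds $\shB$ into $\Phi_*\Omega^1_\spac$ with cokernel $\Omega^1_\spac$ via the Cartier operator; in particular $\shB\subseteq\Phi_*\Omega^1_\spac$. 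For an integer $m$, tensoring the left-hand sequence with $\O_\spac(m)$ and using $\Phi^*\O_\spac(1)\iso\O_\spac(\numprim)$ (absolute Frobenius is functorial and $\O_\spac(1)$ is restricted from $\PP^2$), the projection formula $\Phi_*\O_\spac\otimes\O_\spac(m)\iso\Phi_*\O_\spac(\numprim m)$, and $H^i(\spac,\Phi_*(-))\iso H^i(\spac,-)$, one identifies the one-step pullback $\Phi^*\colon H^1(\spac,\O_\spac(m))\lra H^1(\spac,\O_\spac(\numprim m))$ with the map induced on $H^1$ by the inclusion $\O_\spac(m)\hookrightarrow\Phi_*\O_\spac(\numprim m)$; hence its kernel is the image of $H^0(\spac,\shB\otimes\O_\spac(m))$ under the connecting homomorphism.

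Since $\Frobitpb=\Phi^*\circ\cdots\circ\Phi^*$, with the $r$-th factor acting on $H^1(\spac,\O_\spac(\numprim^{\,r-1}\degclass))$, an induction on $\expoit$ shows that $\Frobitpb$ is injective on $H^1(\spac,\O_\spac(\degclass))$ as soon as $H^0(\spac,\shB\otimes\O_\spac(\numprim^{\,j}\degclass))=0$ for $j=0,\dots,\expoit-1$. Since $\degclass<0$ forces every such twist $\numprim^{\,j}\degclass$ to be $\le -1$, it suffices to show $H^0(\spac,\shB\otimes\O_\spac(m))=0$ for all $m\le -1$. Multiplication by a general linear form is a nonzerodivisor on $\spac$ (because $\ring$ is a domain), yielding injections $H^0(\spac,\shB\otimes\O_\spac(m))\hookrightarrow H^0(\spac,\shB\otimes\O_\spac(m+1))$, so it is enough to treat $m=-1$.

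For $m=-1$ I would use $\shB\subseteq\Phi_*\Omega^1_\spac$, the adjunction isomorphism $\Omega^1_\spac\iso\O_\spac(\degequa-3)$ on the smooth plane curve of degree $\degequa$, and the projection formula once more, to get
$$H^0(\spac,\shB\otimes\O_\spac(-1)) \hookrightarrow H^0\bigl(\spac,\Phi_*\Omega^1_\spac\otimes\O_\spac(-1)\bigr) = H^0\bigl(\spac,\O_\spac(\degequa-3-\numprim)\bigr) .$$
The line bundle $\O_\spac(\degequa-3-\numprim)$ has degree $\degequa(\degequa-3-\numprim)$, and this is negative exactly when $\numprim>\degequa-3$; a line bundle of negative degree on a smooth projective curve has no nonzero global sections, so the group vanishes and the lemma follows. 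The heart of the matter is precisely the observation that $\numprim>\degequa-3$ is the condition making $\Omega^1_\spac\otimes\O_\spac(-\numprim)$ have negative degree; once that is in hand the rest is standard cohomological bookkeeping. The only genuinely delicate point is verifying that the connecting map of the Cartier sequence really computes the Frobenius action on $H^1$ — one must keep in mind that $\Phi^*$ is only additive and semilinear over the $\numprim$th-power map of $\field$, not $\field$-linear, which is harmless as $\field$ is algebraically closed — and one should note that the inclusion $\shB\hookrightarrow\Phi_*\Omega^1_\spac$ relies on smoothness of $\spac$, i.e.\ normality of $\ring$.
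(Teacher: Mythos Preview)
Your proof is correct and takes a genuinely different route from the paper's. The paper interprets the nonzero class $c\in H^1(\spac,\O_\spac(k))$ as an extension, passes to the dual rank-two bundle $\shF$, checks that every quotient line bundle of $\shF$ has positive degree, and then invokes the Hartshorne--Mumford ampleness criterion (using $p>d-3$ in the form $\deg\shF>\tfrac{2}{p}(g-1)$) to conclude that $\shF$ and all its Frobenius pullbacks are ample, whence no pullback of the extension splits. Your argument instead goes through the Cartier sequence: the kernel of $\Phi^*$ on $H^1(\spac,\O_\spac(m))$ is controlled by $H^0(\spac,\shB(m))$, which you bound by $H^0(\spac,\O_\spac(d-3-p))$ via $\shB\subset\Phi_*\Omega^1_\spac$ and the projection formula; the hypothesis $p>d-3$ then enters simply as the condition making this line bundle have negative degree. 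Your approach is more elementary in that it avoids the theory of ample vector bundles altogether and reduces everything to the vanishing of $H^0$ of a negative line bundle; it is in spirit a conceptual, curve-independent version of the direct computation the paper gives in Remark~\ref{remarkextra} for the specific quartic. The paper's approach, on the other hand, yields the extra geometric information that the associated rank-two bundle is ample, which connects to the affineness/torsor picture mentioned in the remark following Lemma~\ref{tightzero}.
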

\begin{proof}
Let a cohomology class
$\classcoho \in H^1 (\spac ,\O_\spac (\degclass)) \cong \Ext^1(\O_C, \O_C(\degclass))$
(see \cite[Proposition III.6.3(c)]{haralg})
be given.
This class defines an extension
$0 \to \O_\spac(\degclass) \to \shS \to \O_\spac \to 0$
(see \cite[Exercise III.6.1]{haralg})
with dual extension
$0 \to \O_\spac \to \shF \to \O_\spac (-\degclass) \to 0$.
Assume that $\degclass$ is negative and that $\classcoho$ is not
zero. In such a situation every quotient bundle of $\shF$ has
positive degree: if $\shF \to \shinv$ is a surjection onto a line
bundle $\shinv$ with
$\deg(\shinv) \leq 0$, then either the composed mapping $\O_\spac
\to \shinv$ is the zero map or the identity
(see \cite[Lemma IV.1.2]{haralg}). In the first case we
get an induced map $\O_\spac(-\degclass) \to \shinv$, yielding a
contradiction. In the second case the sequence would split,
contradicting our assumption that $\classcoho \neq 0$.
We also have
$$\deg(\shF)
= -\degclass \deg(\spac)
\geq \degequa > \frac{2}{\numprim} (\frac{(\degequa -2)(\degequa -1 )}{2} -1 )
= \frac{2}{\numprim} (\genus(\spac) -1) \, .$$
Hence by a Theorem of Hartshorne-Mumford
(see \cite[Corollary 7.7]{hartshorneample})
the rank two bundle $\shF$ is ample.
By \cite[Proposition III.1.6]{haramp} every Frobenius pull-back $\Frobitpb(\shF)$ stays ample
and so every quotient sheaf of $\Frobitpb(\shF)$ is ample as well
(see \cite[Proposition III.1.7]{haramp}).
Since $\O_\spac$ is not ample, it follows that the Frobenius
pull-backs of the short exact sequence can not split. This means
that the Frobenius pull-backs of the cohomology class $\classcoho$
are not zero.
\end{proof}

\begin{remark}
\label{remarkextra}
We give a direct proof of Lemma \ref{frobeniusinjective} for our example $\ring=F[x,y,z]/(z^4+xyz^2+x^3z+y^3z+ \alpha x^2y^2)$ ($\alpha \in \field$, $\field$ a field of characteristic two) which avoids the use of vector bundles and of the theorem of Hartshorne-Mumford. We show that for all $k \leq 0$ the Frobenius acts injectively on $H^2_{\idmax}(\ring)$. For $k=0$ a basis for $(H^2_{\idmax}(\ring))_0$ is given by the $\check{\rm C}$ech-cohomology classes
$$\frac{z^2}{xy},\, \frac{z^3}{x^2y} ,\, \frac{z^3}{xy^2} \, .$$
We compute the images under the Frobenius explicitly, yielding
$$\Frob(\frac{z^2}{xy}) = \frac{z^4}{x^2y^2} = \frac{xyz^2+x^3z+y^3z+ \alpha x^2y^2}{x^2y^2} = \frac{z^2}{xy} $$
and
$$\Frob(\frac{z^3}{x^2y}) = \frac{z^6}{x^4y^2} = \frac{z^2(xyz^2+x^3z+y^3z+ \alpha x^2y^2)}{x^4y^2} =
 \frac {z^3}{xy^2} \, . $$
Similarly $\Frob(\frac{z^3}{xy^2})= \frac{z^3}{x^2y} $, and so the Frobenius is a bijection in degree zero. For the negative degrees $k$ we do induction on $-k$. So suppose $c \in H^2_{\idmax}(\ring)_k$ is a cohomology class which is annihilated by the Frobenius $\Frob$. Then also $xc,yc,zc \in H^2_{\idmax}(\ring)_{k+1}$ are annihilated by the Frobenius. So by the induction hypothesis we have $xc=yc=zc=0$. However, the elements in the socle of $H^2_{\idmax}(\ring)$ are exactly the cohomology classes of degree $1$. Therefore $c=0$.
\end{remark}

\begin{lemma}
\label{tightzero}
Let $\ring = \field [\varx,\vary,\varz]/(\equa)$ be a normal
homogeneous two-dimensional hypersurface ring over an algebraically
closed field $\field$ of positive characteristic $\numprim$,
$\degequa= \deg(\equa)$.
Then for $\numprim > \degequa -3$ the tight closure $0^*$ of $0$ in
$H^2_{\idmax} (\ring)$
{\rm(}where $\idmax =(\varxyz)${\rm)} lives only in non-negative
degrees.
\end{lemma}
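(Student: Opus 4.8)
The plan is to derive the statement from Lemma~\ref{frobeniusinjective} once a finiteness property of $0^*$ is in hand. Since $\ring$ is Cohen--Macaulay of dimension two we may write $H^2_\idmax(\ring)=\bigoplus_{k\in\ZZ}H^1(\spac,\O_\spac(k))$, and under this identification the Frobenius $\Frob$ on $H^2_\idmax(\ring)$ acts in degree $k$ as the absolute Frobenius on $H^1(\spac,\O_\spac(k))$. Write $N=0^*$; it is a graded $\ring$-submodule of $H^2_\idmax(\ring)$. Two elementary facts will be used. First, $N$ is $\Frob$-stable: if $\tau\neq0$ and $\tau\Frob^\expoe(c)=0$ for all $\expoe\ge0$, then the same $\tau$ shows $\Frob(c)\in N$. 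Second, every test element $\tau$ of $\ring$ annihilates $N$; indeed, for the Cohen--Macaulay ring $\ring$, $N$ is a direct limit of modules $\ideal^*/\ideal$ over parameter ideals $\ideal$, and a test element carries $\ideal^*$ into $\ideal$ (the case $q=1$ of the test-element property).

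Next I would show that $N$ has finite length. As $\ring$ is a normal two-dimensional domain of finite type over $\field$ and $\spac=\Proj\ring$ is smooth, $\ring$ is regular on the punctured spectrum $\Spec\ring\setminus\{\idmax\}$; hence $\ring_w$ is regular for every nonzero homogeneous $w\in\idmax$, so by the theory of test elements \cite[Chapter~2]{hunekeapplication} some power of $w$ is a test element. Applying this to $x,y,z$ and using the second fact above, an $\idmax$-primary ideal annihilates $N$. Being also Artinian, as a submodule of the Artinian module $H^2_\idmax(\ring)$, $N$ therefore has finite length; in particular there is an integer $k_0$ with $N_k=0$ for all $k<-k_0$.

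Finally I would combine this with Lemma~\ref{frobeniusinjective}. Suppose $N_k\neq0$ for some $k<0$ and choose $0\neq c\in N_k$. Since $\deg c=k<0$ and $\numprim>\degequa-3$, Lemma~\ref{frobeniusinjective} gives $\Frob^\expoe(c)\neq0$ for every $\expoe$; by $\Frob$-stability $\Frob^\expoe(c)\in N$, and it lies in degree $\numprim^\expoe k$, which is $<-k_0$ once $\expoe$ is large. This contradicts the vanishing of $N$ in sufficiently negative degrees, so $N_k=0$ for all $k<0$, as asserted.

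The one substantial point is the finiteness of $N$: it rests on the parameter test ideal of $\ring$ being $\idmax$-primary, which is exactly where the normality of $\ring$ --- equivalently the smoothness of $\spac$, so that the vertex $\idmax$ is the only candidate for a non-$F$-rational point --- is used. Granting that, the argument is a short Frobenius iteration resting on Lemma~\ref{frobeniusinjective}, and the hypothesis $\numprim>\degequa-3$ enters only through that lemma.
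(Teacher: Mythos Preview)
Your proposal is correct and follows essentially the same route as the paper: identify $H^2_\idmax(\ring)$ with $\bigoplus_k H^1(\spac,\O_\spac(k))$, use that the test ideal is $\idmax$-primary so that $0^*$ sits in a finite-length submodule, and then iterate Frobenius on a hypothetical negative-degree class to push it out of that finite module via Lemma~\ref{frobeniusinjective}. The only cosmetic differences are that the paper cites \cite[Theorem~2.1]{hunekeapplication} directly for the $\idmax$-primariness of the test ideal and invokes Matlis duality for the finiteness, whereas you obtain both via regularity on the punctured spectrum and the direct-limit description $0^*=\varinjlim \ideal^*/\ideal$.
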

\begin{proof}
For local cohomology in general we refer to \cite[Section 3.5]{brunsherzog} and for
$0^*$ to \cite[Section 4]{hunekeparameter}.
Let a cohomology class
$\classcoho \in H^2_\idmax(\ring)$ be given.
The local cohomology module $H^2_\idmax(\ring)$ is
$\ZZ$-graded and it is clear that $\classcoho \in 0^*$
if and only if every homogeneous component of the class belongs to $0^*$.
Hence we may assume that $\classcoho$ is homogeneous of degree $\degclass$.
We claim first that for $\classcoho \neq 0$ of degree $\degclass < 0$ no Frobenius
power annihilates $\classcoho$.
Let $\spac = \Proj \ring$ be the corresponding smooth projective curve.
Then we have graded isomorphisms $(H^2_{\idmax}
(\ring))_\degclass \cong (H^1(D(\idmax), \O_{\ring}))_\degclass$
and
$(H^1(D(\idmax), \O_{\ring}))_\degclass \cong H^1(\spac, \O_\spac(\degclass))$, where $D(\idmax)$ is the punctured spectrum $\Spec (\ring) -\{\idmax\}$ (see \cite[Exercises  III.2.3 and III.3.3]{haralg} or \cite[Section 1.3]{smithvanishingsingularities}).
These isomorphisms are compatible with the action of the Frobenius
(see \cite[Section 1.4]{smithvanishingsingularities}).
So the claim follows from Lemma \ref{frobeniusinjective}.

Suppose now that the cohomology class $\classcoho \neq 0$, homogeneous of negative degree $\degclass$,
belongs to $0^*$.
Then also $\Frobitpb(\classcoho) \in 0^*$ for all $\expoit$ and so the test ideal
$\idealtest=\idealtest_\ring$ annihilates
$\Frobitpb(\classcoho)$ for all $\expoit$.
The test ideal $\idealtest$ contains a power of
$\idmax$ by \cite[Theorem 2.1]{hunekeapplication}.
Hence $\ring/\idealtest$ is Artinian and its Matlis dual (see \cite[Section
3.2]{brunsherzog}), which is the submodule of $H^2_\idmax (\ring)$
annihilated by $\idealtest$, is finite by \cite[Theorem
3.2.13]{brunsherzog}
($H^2_\idmax (\ring)$ itself is the injective
envelope of
$\ring/\idmax$ by \cite[Proposition 3.5.4 (c)]{brunsherzog}).
Since the degrees of $\Frobitpb(\classcoho)$ ($\neq 0$) go to $-\infty$, we get a contradiction.
\end{proof}

\begin{remark}
One can also prove Lemma \ref{tightzero} using the geometric
interpretation of tight closure. By the proof of Lemma
\ref{frobeniusinjective}
we know that the dual extension $\shF$ (corresponding to a non-zero
cohomology class of negative degree) is ample.
Hence the open complement
$\PP(\shF) - \PP( \O_C(-\degclass))$ is an affine scheme by \cite[Proposition
II.2.1]{haramp}. This means by
\cite[Proposition 3.9]{brennertightproj} that $\classcoho \notin 0^*$.

The extension used in the proof of Lemma
\ref{frobeniusinjective} can be made more explicit. Suppose, for ease of notation, that
$\varx$ and $\vary$ are parameters and that the homogeneous
cohomology class of degree $\degclass$ is given as a
$\rm\check{C}$ech cohomology class
$\classcoho = \frac{\elnum}{\varx^\expox\vary^\expoy}$ with $\elnum$ homogeneous of degree $\degm$,
$\degclass =\degm- \expox - \expoy$.
Then the extension is
$$0 \longto \O_\spac(\degclass) \cong \Syz(\varx^\expox, \vary^\expoy)(\degm)
\longto \shS \cong \Syz(\varx^\expox, \vary^\expoy, \elnum)(\degm) \longto \O_\spac \longto
0 \, ,$$
where the identification on the left is induced by $1 \mapsto (\vary^\expoy,- \varx^\expox)$
and the last mapping is the projection to the third component.
This can be seen by computing the corresponding cohomology class via
the connecting homomorphism.
\end{remark}

\begin{theorem}
\label{testideal}
Let $\ring = \field [\varx,\vary,\varz]/(\equa)$ be a normal
homogeneous two-dimensional hypersurface ring over an algebraically
closed field $\field$ of positive characteristic $\numprim$,
$\degequa= \deg(\equa)$.
Then for $\numprim > \degequa -3$
every non-zero element of degree $ \geq \degequa -2$ is a test
element for tight closure.
\end{theorem}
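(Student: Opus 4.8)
The plan is to reduce the test-element condition to a single degree inequality about the tight closure of zero in the top local cohomology module $H^{2}_{\idmax}(\ring)$, and then to conclude by bookkeeping, using Lemma \ref{tightzero} together with the fact that $\ring$, being a hypersurface of degree $\degequa$ in a polynomial ring in three variables, is Gorenstein with canonical module $\omega_{\ring}\cong\ring(\degequa-3)$. First I would invoke the standard reductions: since $\ring$ is a graded domain it suffices to check the test condition for homogeneous ideals $\idealsec$ and homogeneous elements of $\idealsec^{*}$, and, replacing $\idealsec$ by $\idealsec+\idmax^{n}$ and using the Krull intersection theorem, one may assume $\idealsec$ is $\idmax$-primary. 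Because $\ring$ is Gorenstein, its test ideal agrees with the parameter test ideal (\cite{hunekeparameter}, \cite[Chapter 2]{hunekeapplication}), and the latter equals $\Ann_{\ring}\big(0^{*}_{H^{2}_{\idmax}(\ring)}\big)$: choosing a homogeneous system of parameters $g_{1},g_{2}$ in $\idealsec$ — a regular sequence, as $\ring$ is Cohen--Macaulay of dimension two — one has $H^{2}_{\idmax}(\ring)=\dirlim_{t}\ring/(g_{1}^{t},g_{2}^{t})$ with transition maps given by multiplication by $g_{1}g_{2}$, the natural map $\ring/(g_{1},g_{2})\to H^{2}_{\idmax}(\ring)$ is injective, and it carries the class of any element of $(g_{1},g_{2})^{*}$ into $0^{*}_{H^{2}_{\idmax}(\ring)}$. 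Thus it is enough to show that every nonzero homogeneous $\ele$ with $\deg\ele\geq\degequa-2$ satisfies $\ele\cdot 0^{*}_{H^{2}_{\idmax}(\ring)}=0$.

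Now I would carry out the degree count. By graded local duality (\cite[Section 3.6]{brunsherzog}), $H^{2}_{\idmax}(\ring)$ is the graded Matlis dual of $\omega_{\ring}\cong\ring(\degequa-3)$, so $\big(H^{2}_{\idmax}(\ring)\big)_{\degclass}=0$ for every $\degclass>\degequa-3$. On the other hand $0^{*}_{H^{2}_{\idmax}(\ring)}$ is concentrated in non-negative degrees by Lemma \ref{tightzero}. Hence, for a nonzero homogeneous $\ele$ with $\deg\ele\geq\degequa-2$, the graded submodule $\ele\cdot 0^{*}_{H^{2}_{\idmax}(\ring)}$ is concentrated in degrees $\geq\degequa-2>\degequa-3$, where $H^{2}_{\idmax}(\ring)$ vanishes; therefore $\ele\cdot 0^{*}_{H^{2}_{\idmax}(\ring)}=0$. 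The same computation applies to any nonzero element all of whose homogeneous components have degree $\geq\degequa-2$, so by the reduction of the previous paragraph such an element is a test element.

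I expect the first step to be the main obstacle: turning the definition of a test element — a condition quantified over all ideals — into the single identity $\ele\cdot 0^{*}_{H^{2}_{\idmax}(\ring)}=0$. This rests on the coincidence of the test ideal with the parameter test ideal in the Gorenstein case and on the identification of the parameter test ideal with $\Ann_{\ring}\big(0^{*}_{H^{2}_{\idmax}(\ring)}\big)$ through the presentation of $H^{2}_{\idmax}(\ring)$ as a direct limit of the rings $\ring/(g_{1}^{t},g_{2}^{t})$; these are standard but nontrivial facts from the literature. Once they are available, the theorem is just the elementary degree bookkeeping of the second paragraph, and this is the precise point where Lemma \ref{tightzero} — and thus, ultimately, the ampleness argument behind Lemma \ref{frobeniusinjective} — is used.
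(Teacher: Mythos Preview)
Your proposal is correct and follows essentially the same route as the paper: identify the test ideal with $\Ann_{\ring}\bigl(0^{*}_{H^{2}_{\idmax}(\ring)}\bigr)$, invoke Lemma~\ref{tightzero} to place $0^{*}$ in non-negative degrees, and then use the vanishing $(H^{2}_{\idmax}(\ring))_{k}=0$ for $k\geq d-2$ to conclude that $\ring_{\geq d-2}$ annihilates $0^{*}$. The only difference is cosmetic: the paper cites \cite[Proposition 4.1]{hunekeparameter} directly for the identification $\idealtest=\Ann_{\ring}(0^{*})$ and phrases the vanishing via $H^{1}(\spac,\O_{\spac}(k))$ and the canonical divisor on the curve, whereas you unpack the same identification through the Gorenstein/parameter-test-ideal route and get the vanishing from graded local duality.
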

\begin{proof}
The test ideal $\idealtest$ is the annihilator of the tight closure $0^*$ inside
$H^2_\fom(\ring )$
by \cite[Proposition 4.1]{hunekeparameter}. By Lemma
\ref{tightzero} we have $0^* \subseteq H^2_\fom(\ring)_{\geq
0}$.
We also have $(H^2_\idmax(\ring))_\degclass = H^1(\spac, \O_\spac(\degclass))= 0$ for $ \degclass \geq
\degequa -2$, since the canonical divisor is $\O_\spac(\degequa-3)$.
Hence $\ring_{\geq \degequa -2}$ multiplies every cohomology class of non-negative
degree into $0$, and therefore $ \ring_{\geq \degequa -2} \subseteq
\idealtest$.
\end{proof}

\begin{remark}
Theorem \ref{testideal} is known to hold for $\numprim \gg 0$ by the
so-called strong vanishing theorem due to Hara (see \cite{hararational} and \cite[Theorem 6.4]{hunekeparameter}). The point here is
the explicit bound for the prime number (Theorem 6.4 in \cite{hunekeparameter} also has an explicit bound obtained by elementary means. Huneke's bound seems to be $p >d-2$, which for our purpose just fails). Note that for $\degequa=4$
every element of degree $\geq 2$ is a test element in arbitrary
characteristic. For $\degequa=5$ it is not true that every element
of degree $\geq 3$ is a test element in characteristic two. For
example, in
$\field[\varxyz]/(\varx^5+\vary^5+\varz^5)$ we have $\varz^3 \in
 (\varx^2,\vary^2)^*$, since
$(\varz^3)^2 = \varz^6 =\varz(\varx^5+\vary^5) \in
(\varx^4,\vary^4)= (\varx^2,\vary^2)^{[2]}$, but
$(\varx\vary\varz ) \varz^3\not\in (\varx^2,\vary^2)$.
\end{remark}

\section{The example}

Throughout, $\fieldclo$ is the algebraic closure of $\ZZ/2$, and we set
$$ \poly
= \varz^4+ \varx\vary\varz^2 + \varx^3\varz +
\vary^3\varz
\,\,\,\mbox{ and }\,\,\,
\equa_\alg = \poly + \alg \varx^2\vary^2
 \, ,$$
where $\alg \in \field$, some field of characteristic two, or
$\alg=\vartrans$, a new variable.

\begin{definition}
\label{exampledef}
Set $\ringglobal = \fieldclo[\varxyz,\vartrans]/(\equatrans)$,
where $\equatrans= \poly +
\extratrans$,
$\ideal= \idealcu$, $\ele = \elecu$ and $\sysmult = \sysmultcu$.
\end{definition}

We will show in this and the next section that these data constitute a
counterexample to the localization property. The following two
results are contained in or are implicit in
\cite{monskyhilbertkunzpointquartic}.

\ren{\ind}{{i}}

\begin{theorem}
\label{theoremoneone}
Let $\ringo$ be the graded ring
$\field[\varxyz]/\idmaxgenbrafropof $ where ${\rm char} (\field)=2$
and $\pof \geq 2$ is a power of $2$.
Suppose that $\alg$ in $\field$ is transcendental over $\ZZ/(2)$.
Then the ring $\ringo/( \equa_\alg )$ is trivial in degree $\geq
3\pof/2 +1$.
\end{theorem}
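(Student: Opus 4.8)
The plan is to convert the assertion, by Gorenstein duality in $\ringo$, into the vanishing of $\Ann_{\ringo}(\equa_\alg)$ in low degrees, and then to extract that vanishing from the Hilbert--Kunz analysis of $\ring_\alg=\field[x,y,z]/(\equa_\alg)$ carried out in \cite{monskyhilbertkunzpointquartic}. Observe at the outset that $\dim_\field(\ringo/(\equa_\alg))_d=\dim_\field(\ring_\alg/(x^\pof,y^\pof,z^\pof))_d$, so the statement is really about where the $\ZZ$-graded Hilbert--Kunz data of $\ring_\alg$ for the parameters $x^\pof,y^\pof,z^\pof$ ceases to be nonzero, and \cite{monskyhilbertkunzpointquartic}, where the Hilbert--Kunz function of $\ring_\alg$ is computed, is exactly the input that controls this.

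For the reduction, let $s=3(\pof-1)$ be the socle degree of the monomial complete intersection $\ringo$; since $\ringo$ is Gorenstein, $h_m:=\dim_\field(\ringo)_m$ is symmetric ($h_m=h_{s-m}$) and unimodal, and the multiplication pairing $(\ringo)_m\times(\ringo)_{s-m}\to(\ringo)_s\cong\field$ is perfect. For homogeneous $g$ of degree $e$ one has $g\ringo\cong(\ringo/\Ann g)(-e)$, and the orthogonal complement of $(g\ringo)_m$ inside $(\ringo)_{s-m}$ is $(\Ann g)_{s-m}$ once $m\ge e$; comparing dimensions gives $\dim_\field(\ringo/(g))_m=\dim_\field(\Ann g)_{s-m}$ for $m\ge e$. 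With $g=\equa_\alg$ (degree $4$) the claim becomes: $(\Ann_{\ringo}\equa_\alg)_j=0$ for $j\le s-(3\pof/2+1)=3\pof/2-4$, i.e.\ multiplication by $\equa_\alg$ is injective from $(\ringo)_j$ into $(\ringo)_{j+4}$ for $j\le 3\pof/2-4$ (equivalently, surjective from $(\ringo)_{d-4}$ onto $(\ringo)_d$ for $d\ge 3\pof/2+1$). That $3\pof/2$ is the correct threshold is forced by the shape of $(h_m)$: a short comparison of $|m-s/2|$ with $|m-4-s/2|$, using that $\pof$ is even, shows $h_m-h_{m-4}>0$ for $m\le 3\pof/2$ and $h_m-h_{m-4}<0$ for $m\ge 3\pof/2+1$. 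Since $\dim_\field(\ringo/(g))_m\ge\max(0,\,h_m-h_{m-4})$ for \emph{every} quartic $g$, while equality at the top is the value one expects of a generic quartic (Fr\"oberg), the content of the theorem is precisely that for transcendental $\alg$ the quartic $\equa_\alg$ realizes that generic behaviour in top degrees; for \emph{algebraic} $\alg$ it does not --- the Hilbert--Kunz multiplicity of $\ring_\alg$ then exceeds $3$ --- so the transcendence of $\alg$ is used essentially and no purely formal argument can work.

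The remaining step, and the genuinely hard one, is to establish this ``genericity'' uniformly in $\pof=2^e$; this is what \cite{monskyhilbertkunzpointquartic} supplies. Its explicit formula for the Hilbert--Kunz function of $\ring_\alg$ (for transcendental $\alg$), combined with the universal lower bound above, pins down $\dim_\field(\ringo/(\equa_\alg))_m$ in the top range and forces the vanishing for $m\ge 3\pof/2+1$. Concretely I would isolate from loc.\ cit.\ the statement that, for transcendental $\alg$, there is no nonzero triple $(a,b,c)$ with $\deg a=\deg b=\deg c\le\pof/2$ and $ax^\pof+by^\pof+cz^\pof=0$ in $\ring_\alg$ --- equivalently, in the vector-bundle language of Section~\ref{preparations}, that the restriction of $\Syz(x,y,z)$ to the smooth quartic $\Proj\ring_\alg$ is strongly semistable, so that its Frobenius pull-back $\Syz(x^\pof,y^\pof,z^\pof)$ stays semistable and acquires no global section after the twist relevant here --- and then transport it through the Gorenstein duality. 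The transcendence of $\alg$ enters exactly in ruling out the ``syzygy gaps'' that occur for special $\alg$; proving their absence uniformly in $\pof$ is Monsky's computation, and that is the one real obstacle. (At the extremal degree $m=3\pof/2$ one needs a shade more than bare semistability --- that the corresponding slope-zero bundle have no nonzero section --- which Monsky's analysis also delivers.)
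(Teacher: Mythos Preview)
Your proposal is correct and follows essentially the same route as the paper: reduce, via the Gorenstein duality of $\ringo$ (socle degree $3\pof-3$), the vanishing of $(\ringo/(\equa_\alg))_d$ for $d\ge 3\pof/2+1$ to the injectivity of $\cdot\,\equa_\alg:\ringo_i\to\ringo_{i+4}$ for $i\le 3\pof/2-4$, and then extract that injectivity from \cite{monskyhilbertkunzpointquartic}. The paper is simply more economical at the second step: it quotes that $\dim_\field\ringo/(\equa_\alg)=3\pof^2-4$ (Theorem~4.18 there) and combines this with Lemma~4.1 there to get injectivity in the stated range, whereas you reinterpret the same input through the syzygy/semistability language (which is equivalent, as a non-Koszul syzygy of $(x^\pof,y^\pof,z^\pof)$ in $\ring_\alg$ of degree $\le 3\pof/2$ is exactly an element of $(\Ann_{\ringo}\equa_\alg)_{\le 3\pof/2-4}$).
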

\begin{proof}
Theorem 4.18 of \cite{monskyhilbertkunzpointquartic}
tells us that $\ringo/(\equa_\alg)$ has dimension
$3 \pof^2-4$, Combining this with Lemma 4.1 of \cite{monskyhilbertkunzpointquartic}
we find that the multiplication by $\equa_\alg$ map,
$\ringo_\ind \to \ringo_{\ind+4}$ is one to one
(injective)
for $\ind \leq 3\pof/2 -4$. Now when $\ind+\indsec=3\pof -3$,
$\ringo_\ind$ and
$\ringo_\indsec$ are dually paired into the one-dimensional vector
space $\ringo_{3\pof-3}$. Furthermore the multiplication by
$\equa_\alg$ maps $\ringo_\ind \to \ringo_{\ind+4}$ and $\ringo_{\indsec-4} \to \ringo_{\indsec}$
are dual. So for $\indsec \geq 3\pof/2 +1$, $\ringo_{\indsec-4} \to
\ringo_{\indsec}$ is onto.
\end{proof}

\begin{theorem}
\label{theoremonetwo}
Suppose $\elealg$ in $\unitfield$ is algebraic over $\ZZ/(2)$. Choose
$\elesec \in \overline{\field}$ so that $\elealg= \elesec^2+\elesec$.
Let $\degele =
\degele (\elealg)$ be the degree of $\elesec$ over $\ZZ/(2)$
{\rm(}Since for each $\elek$,
$\sum_{\ell < \elek} \elealg^{2^\ell} = \elesec ^{2^\elek}
+\elesec$, $\degele$ may be described as the smallest $\elek$ such
that $\sum_{\ell \leq \elek -1}  \elealg^{2^\ell}$ is $0${\rm)}. Let
$\po=2^{\degele -1}$, and
$\ringo= \field[\varxyz]/(\varx^{4 \po},\vary^{4 \po}, \varz^{4 \po})$.
Then in degree $6\po+2$, $\ringo/(\equa_\elealg)$ has dimension $1$.
\end{theorem}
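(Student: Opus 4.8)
The plan is to reduce the claim, via the Gorenstein symmetry of the Artinian complete intersection $\ringo$, to a single dimension count that is then supplied by the Hilbert--Kunz computations of \cite{monskyhilbertkunzpointquartic}. Write $h(d)=\dim_{\field}(\ringo/(\equa_\elealg))_{d}$, so that the assertion is $h(6\po+2)=1$. Note that $6\po+2$ is exactly the degree of the monomial $\varx\vary^{3\po+1}\varz^{3\po}$ that matters in Section~\ref{noninclusion}, and that Theorem~\ref{theoremoneone}, applied with $\pof=4\po$, gives $h_{0}(d)=0$ for all $d\geq 6\po+1$, where $h_{0}$ denotes the same Hilbert function in the transcendental case; in particular $h_{0}(6\po+2)=0$.

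First I would record the duality. The ring $\ringo$ is a graded Artinian complete intersection, hence Gorenstein, with one-dimensional socle in degree $3(4\po-1)=12\po-3$. For each $d$, multiplication by $\equa_\elealg$ (homogeneous of degree $4$) gives an exact sequence
$$0 \longrightarrow (0:_{\ringo}\equa_\elealg)_{d-4} \longrightarrow \ringo_{d-4} \xrightarrow{\,\cdot\,\equa_\elealg\,} \ringo_{d} \longrightarrow (\ringo/(\equa_\elealg))_{d} \longrightarrow 0,$$
and Gorenstein self-duality identifies $(0:_{\ringo}\equa_\elealg)$ with the graded $\field$-dual of $\ringo/(\equa_\elealg)$ up to the socle shift, so that $\dim_{\field}(0:_{\ringo}\equa_\elealg)_{j}=h(12\po-3-j)$ for every $j$. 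Taking $d=6\po+2$ and inserting the routine binomial values $\dim_{\field}\ringo_{6\po+2}=12\po^{2}-12$ and $\dim_{\field}\ringo_{6\po-2}=12\po^{2}$, the sequence yields
$$h(6\po+2) = \dim_{\field}\ringo_{6\po+2}-\dim_{\field}\ringo_{6\po-2}+h(6\po-1) = h(6\po-1)-12 .$$
Hence it suffices to prove that $(\ringo/(\equa_\elealg))_{6\po-1}$ has dimension exactly $13$.

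This last count is where \cite{monskyhilbertkunzpointquartic} enters. Running the same duality for a transcendental parameter and using $h_{0}(6\po+2)=0$ gives $h_{0}(6\po-1)=12$; so what must be shown is that passing from a transcendental $\elealg$ to an algebraic one of degree parameter $\degele$ raises the Hilbert function of $\ringo/(\equa_\elealg)$ in degree $6\po-1$ by exactly one. Equivalently, since $\dim_{\field}\ringo/(\equa_\elealg)$ is a value of the Hilbert--Kunz function of $\equa_\elealg$, one must show that this total dimension exceeds its transcendental value $3(4\po)^{2}-4$ by a small amount concentrated near degree $6\po$, contributing exactly one in degree $6\po-1$ and, by the symmetry above, exactly one in degree $6\po+2$. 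I would extract this refined information from Monsky's Hilbert--Kunz analysis of $\equa_\elealg$ in \cite{monskyhilbertkunzpointquartic} -- the analogue for algebraic $\elealg$ of the Theorem~4.18 and Lemma~4.1 used in the proof of Theorem~\ref{theoremoneone}, together with the vanishing of $\ringo/(\equa_\elealg)$ in degrees $\geq 6\po+3$, which keeps the jump localized. The main obstacle is precisely this input: the fact that the Hilbert--Kunz function of $\equa_\elealg$ climbs above $3(4\po)^{2}-4$ exactly at the algebraic parameters, by exactly the right amount and in exactly the degrees $6\po-1$ and $6\po+2$, is the delicate arithmetic heart of \cite{monskyhilbertkunzpointquartic}, whereas the reduction above is purely formal; if one instead reads the graded Hilbert function of $\ringo/(\equa_\elealg)$ off those computations directly, the duality step is unnecessary and the statement is immediate.
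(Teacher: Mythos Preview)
Your duality framework is correct and is exactly what the paper does, but you have over-engineered the final step and thereby left a gap.

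The paper's proof is one line of duality plus one citation. Lemma~4.15 of \cite{monskyhilbertkunzpointquartic} (with $q=4Q=2^{m+1}$) states directly that the multiplication-by-$g_\alpha$ map $\ringo_{6Q-5}\to\ringo_{6Q-1}$ has one-dimensional kernel. Since the Gorenstein pairing on $\ringo$ makes the maps $\ringo_{6Q-5}\to\ringo_{6Q-1}$ and $\ringo_{6Q-2}\to\ringo_{6Q+2}$ dual, the latter has one-dimensional cokernel, i.e.\ $h(6Q+2)=1$. That is the entire argument.

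Your reduction $h(6Q+2)=h(6Q-1)-12$ is correct, and your target $h(6Q-1)=13$ is \emph{equivalent} to the input above: from the exact sequence one has $h(6Q-1)=\dim\ringo_{6Q-1}-\dim\ringo_{6Q-5}+\dim(0:_{\ringo}g_\alpha)_{6Q-5}=12+\dim(0:_{\ringo}g_\alpha)_{6Q-5}$, so $h(6Q-1)=13$ is precisely the statement that the kernel in degree $6Q-5$ is one-dimensional, which is Lemma~4.15. You should simply cite that lemma. Your proposed alternative route---comparing total Hilbert--Kunz dimensions, invoking an ``analogue of Theorem~4.18 and Lemma~4.1'', and assuming vanishing of $\ringo/(g_\alpha)$ in degrees $\ge 6Q+3$---is not a proof: the vanishing claim is precisely of the same strength as what you are trying to prove (indeed $h(6Q+3)=0$ is, by your own duality, equivalent to a statement about $h(6Q-2)$ that you have not established), and the distribution of the Hilbert--Kunz excess across degrees is not something you can read off from the total dimension alone. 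The ``delicate arithmetic heart'' you allude to has already been isolated in \cite{monskyhilbertkunzpointquartic} as exactly the one-dimensionality of that kernel; there is no need to reassemble it from coarser data.
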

\begin{proof}
Lemma 4.15 of \cite{monskyhilbertkunzpointquartic} with $\pof=4
\po=2^{\degalg+1}$ tells us that the multiplication by $\equa_\alg$
map
$\ringo_{6\po-5} \to \ringo_{6\po-1}$ has one-dimensional kernel. The
duality argument given above shows that
$\ringo_{6\po-2} \to \ringo_{6\po+2}$ has one-dimensional cokernel.
\end{proof}

\ren{\fieldquot}{{F}}

\begin{theorem}
\label{inclusiontrans}
$\ele$ is in $(\sysmult^{-1} \ideal)^*$.
\end{theorem}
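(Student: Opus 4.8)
The plan is to verify the definition of $\sysmult^{-1}\ideal$-tight closure directly, using Theorem \ref{theoremoneone} as the single real input. Since $\sysmult=\fieldclo[\vartrans]-\{0\}$, the localization $\sysmult^{-1}\ringglobal$ is the generic fiber ring $\ring_{\fieldclo(\vartrans)}=\fieldclo(\vartrans)[\varxyz]/(\equa_\vartrans)$, which is a domain (the ring $\ringglobal$ is a domain, hence so is any localization of it) and contains the field $\fieldclo(\vartrans)$ of characteristic two, so tight closure is defined in it. Here $\sysmult^{-1}\ideal=(\varx^4,\vary^4,\varz^4)$, and for a power of two $\po=2^{\expoe}$ its Frobenius power is $\ideal^{[\po]}=(\varx^{4\po},\vary^{4\po},\varz^{4\po})$. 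By the definition of tight closure it is therefore enough to exhibit one nonzero element of $\ring_{\fieldclo(\vartrans)}$ --- I will simply take $\varx$ --- for which $\varx\cdot\ele^\po\in\ideal^{[\po]}$ for all $\po=2^{\expoe}$.

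The point is that the residue ring $\ring_{\fieldclo(\vartrans)}/\ideal^{[\po]}$ is exactly the ring occurring in Theorem \ref{theoremoneone}:
\[
\ring_{\fieldclo(\vartrans)}/\ideal^{[\po]}=\fieldclo(\vartrans)[\varxyz]/(\equa_\vartrans,\varx^{\pof},\vary^{\pof},\varz^{\pof})=\ringo/(\equa_\vartrans),
\]
where $\pof=4\po$ (a power of two, and $\geq2$) and $\ringo=\fieldclo(\vartrans)[\varxyz]/(\varx^{\pof},\vary^{\pof},\varz^{\pof})$. Since $\vartrans$ is a variable it is transcendental over $\ZZ/(2)$, so Theorem \ref{theoremoneone}, applied over the field $\fieldclo(\vartrans)$ with $\alg=\vartrans$, tells us that $\ringo/(\equa_\vartrans)$ is trivial in every degree $\geq3\pof/2+1=6\po+1$. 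As $\ele=\vary^3\varz^3$, the product $\varx\cdot\ele^\po=\varx\,\vary^{3\po}\varz^{3\po}$ is homogeneous of degree $6\po+1$, so its class in $\ring_{\fieldclo(\vartrans)}/\ideal^{[\po]}$ vanishes; that is, $\varx\cdot\ele^\po\in\ideal^{[\po]}$. This holds for every power of two $\po=2^{\expoe}$ (including $\expoe=0$, since $4\cdot2^{\expoe}\geq2$ always), and $\varx\neq0$ in the domain $\ring_{\fieldclo(\vartrans)}$; hence $\ele\in(\sysmult^{-1}\ideal)^*$.

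As the introduction anticipates, this half of the construction is comparatively soft: all the substance is packed into Theorem \ref{theoremoneone}, itself a reading of Monsky's explicit Hilbert--Kunz analysis of $\equa_\alg$ in \cite{monskyhilbertkunzpointquartic}, and I do not foresee a genuine obstacle. The only point needing care is the degree count: $\ele^\po$ has degree exactly $6\po$, which lies on the boundary of --- not strictly inside --- the vanishing range $\{\,d:d\geq6\po+1\,\}$ furnished by Theorem \ref{theoremoneone}, so one cannot hope for $\ele^\po\in\ideal^{[\po]}$ on the nose; multiplying by the degree-one form $\varx$ (or by any form of positive degree) is precisely what pushes the product into the vanishing range. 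One should also record, if it is not already a standing hypothesis, that $\equa_\vartrans=\poly+\vartrans\varx^2\vary^2$ is irreducible over $\fieldclo$ --- being linear in $\vartrans$ with coprime coefficients $\varx^2\vary^2$ and $\poly$ --- so that $\ring_{\fieldclo(\vartrans)}$ genuinely is a domain and the tight-closure vocabulary is legitimate.
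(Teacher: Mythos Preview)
Your argument is correct and is essentially the paper's own second (elementary) proof: identify $\sysmult^{-1}\ringglobal$ with $\fieldclo(\vartrans)[\varxyz]/(\equa_\vartrans)$ and invoke Theorem~\ref{theoremoneone} with $\pof=4\po$ to see that a degree-one multiple of $\ele^\po$ lies in $\ideal^{[\po]}$ for all $\po$ (the paper multiplies by $\vary$ rather than $\varx$, an immaterial difference). The paper also records a more conceptual alternative via strong semistability of $\Syz(\varx,\vary,\varz)$ and the degree bound from \cite{brennerslope}, but your route coincides with its direct proof.
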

\begin{proof}
Let $\fieldquot = \fieldclo(\vartrans)$. Then $\sysmult^{-1}\ringglobal$
identifies with
$\ring_\vartrans = \fieldquot[\varxyz]/(\equatrans)$.
We give two proofs of Theorem \ref{inclusiontrans}, one more conceptual, one more
elementary, but both rest in the end on
\cite{monskyhilbertkunzpointquartic}. First proof: Since by
\cite{monskyhilbertkunzpointquartic} the Hilbert-Kunz multiplicity
of $\ringtrans$ is $3$, the syzygy bundle
$\Syz(\varxyz)$ on $\Proj \ringtrans$ is strongly semistable by
\cite[Corollary 4.6]{brennerhilbertkunz}. Then also its second Frobenius pull-back
$\Syz(\varx^4,\vary^4,\varz^4)$ is strongly semistable. By the
degree bound \cite[Theorem 6.4]{brennerslope} everything of degree
$(4+4+4)/2 =6$ belongs to the tight closure of the ideal $\ideal$ in
$\ringtrans$.

For the second proof, let $\po$
be a power of $2$. Since $\vary \ele^\po$ has degree $6\po +1$,
Theorem \ref{theoremoneone} with $\pof =4 \po$
shows that $\vary \ele^\po$ represents $0$ in
$\fieldquot[\varxyz]/( \idealgenfourpo , \equatrans)$.
In other words, in the ring $ \fieldquot[\varxyz]/(\equatrans)$, the
element
$ \vary \ele^\po $ lies in $\idealcu^{[\po]}$ for all $\po$. This
gives the theorem.
\end{proof}

\begin{theorem}
\label{complete}
In the situation of definition \ref{exampledef}, we have $\ele \in
(\sysmult^{-1} \ideal)^*$ in $\sysmult^{-1} \ringglobal$, but for each $\alg
\in \fieldclo^\times$,
$\ele \not\in
\ideal^*$ in $\ring_\alg= \ringglobal \tensor_{\fieldclo[\vartrans]} \fieldclo$
{\rm(}where $\fieldclo[\vartrans]$ acts on $\fieldclo$ by $\vartrans \mapsto \alg${\rm)}.
Hence tight closure does not commute with
localization.
\end{theorem}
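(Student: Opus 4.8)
The plan is to split the statement into three parts. The first inclusion, $\ele \in (\sysmult^{-1}\ideal)^*$ in $\sysmult^{-1}\ringglobal$, is exactly Theorem \ref{inclusiontrans}, so nothing new is required there. For the concluding sentence I would argue by contradiction from Proposition \ref{deform}: in the situation of Definition \ref{exampledef} one has $\ringone := \fieldclo[\vartrans] \subseteq \ringglobal$, both of finite type over $\fieldclo$, with $\ringone$ one-dimensional, and $\ringglobal$ a domain because $\equatrans = \poly + \vartrans\,\varx^2\vary^2$ is linear in $\vartrans$ with $\gcd(\varx^2\vary^2,\poly)=1$ in $\fieldclo[\varxyz]$, hence irreducible. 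Since $\sysmult$ consists of the nonzero elements of $\ringone$, the generic fiber ring of Proposition \ref{deform} is $\sysmult^{-1}\ringglobal$, and $\ele \in \ideal^*$ there by Theorem \ref{inclusiontrans}. If tight closure commuted with localization, Proposition \ref{deform} would then force $\ele \in \ideal^*$ in $\ringglobal \tensor_\ringone \ringone/\idmax = \ring_\alg$ for all but finitely many maximal ideals $\idmax = (\vartrans-\alg)$ of $\ringone$, which, as $\fieldclo$ is algebraically closed, account for all the maximal ideals. That contradicts the remaining assertion, valid for every $\alg$ in the infinite set $\fieldclo^\times$. So the theorem reduces to showing, for each $\alg \in \fieldclo^\times$, that $\ele \notin \ideal^*$ in $\ring_\alg$.

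For this non-inclusion, fix $\alg \in \fieldclo^\times$. First I would verify that $\ring_\alg = \fieldclo[\varxyz]/(\equa_\alg)$ satisfies the hypotheses of Theorem \ref{testideal}: it is a normal homogeneous two-dimensional hypersurface ring of degree $\degequa = 4$ over the algebraically closed field $\fieldclo$ of characteristic $\numprim = 2 > \degequa - 3$, the normality coming from the smoothness of the plane quartic $\{\equa_\alg = 0\} \subseteq \PP^2_\fieldclo$ for $\alg \neq 0$ (one checks that its partial derivatives in characteristic two, namely $\varz(\vary\varz+\varx^2)$, $\varz(\varx\varz+\vary^2)$ and $\varx^3+\vary^3$, have no common zero on the curve). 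By Theorem \ref{testideal} every nonzero form of degree $\geq 2$ is then a test element, in particular $\varx\vary$; so by the defining property of the test element $\varx\vary$ it is enough to find a single power $\po = 2^\expoe$ with $\varx\vary\,\ele^\po \notin \ideal^{[\po]}$ in $\ring_\alg$. I would take $\po = 2^{\degele(\alg)-1}$, which is $\geq 2$ since $\alg \neq 0$ forces $\degele(\alg)\geq 2$. Setting $\ringo := \fieldclo[\varxyz]/\ideal^{[\po]} = \fieldclo[\varxyz]/(\varx^{4\po},\vary^{4\po},\varz^{4\po})$, the element in question is $\varx\vary\,\ele^\po = \varx\,\vary^{3\po+1}\,\varz^{3\po}$ (using $\ele = \vary^3\varz^3$), a monomial of degree $6\po+2$ all of whose exponents are $< 4\po$ because $\po \geq 2$; hence it is nonzero in $\ringo$, and the claim becomes that $\varx\vary\,\ele^\po$ is nonzero in the degree-$(6\po+2)$ component of $\ringo/(\equa_\alg)$ — which is one-dimensional by Theorem \ref{theoremonetwo}.

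The hard part, and the content of Section \ref{noninclusion}, is precisely this last claim: that the monomial $\varx\,\vary^{3\po+1}\,\varz^{3\po}$ avoids $\equa_\alg\,\ringo$ in degree $6\po+2$, and that this holds uniformly over all algebraic $\alg$. The route I would take uses the Gorenstein structure of the Artinian complete intersection $\ringo$, whose socle sits in degree $12\po-3$: by Macaulay duality the cokernel of multiplication $\ringo_{6\po-2}\xrightarrow{\equa_\alg}\ringo_{6\po+2}$ is the dual of the kernel of $\ringo_{6\po-5}\xrightarrow{\equa_\alg}\ringo_{6\po-1}$ — note $(6\po+2)+(6\po-5) = 12\po-3$, and this is the one-dimensional kernel underlying Theorem \ref{theoremonetwo} — so the task becomes to show that the product of $\varx\,\vary^{3\po+1}\,\varz^{3\po}$ with a spanning element of that kernel is nonzero in the one-dimensional socle $\ringo_{12\po-3}$. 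Isolating such a kernel element and evaluating the product is where Monsky's detailed study of $\ringo/(\equa_\alg)$ in \cite{monskyhilbertkunzpointquartic} becomes indispensable; the computer experiments mentioned in the introduction confirm the non-vanishing for $\alg$ of small degree, but the genuinely difficult point — the one I expect to occupy the rest of the proof — is to establish it for all algebraic $\alg$ at once, i.e. along the entire sequence of Frobenius powers $\po = 2^{\degele(\alg)-1}$ as $\degele(\alg)\rightarrow\infty$.
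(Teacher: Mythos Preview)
Your proposal is correct and follows essentially the same route as the paper's proof: cite Theorem~\ref{inclusiontrans} for the generic inclusion, invoke Theorem~\ref{testideal} to reduce the non-inclusion in $\ring_\alg$ to the single statement $\varx\vary\,\ele^{\po}\notin\ideal^{[\po]}$ with $\po=2^{\degele(\alg)-1}$ (deferred to Section~\ref{noninclusion}), and then apply Proposition~\ref{deform} for the contradiction. Your added hypothesis checks (irreducibility of $\equatrans$, smoothness of the quartic for $\alg\neq 0$, $\po\geq 2$) and your duality-based preview of how Section~\ref{noninclusion} attacks $\elerep\notin\equa_\alg\ringo$ are accurate and simply make explicit what the paper leaves implicit.
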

\begin{proof}
The first result is Theorem \ref{inclusiontrans}. Let
$ \alg\in \fieldclo$ be algebraic, $\alg \neq 0$.
Write $\alg=\elesec^2+\elesec$ and set $\degalg = \deg (\elesec)$
(as in Theorem \ref{theoremonetwo}) and $\po=2^{\degalg-1}$.
We will show in the following section that $\varx\vary (\ele^\po)
\not\in \ideal^{[\po]}$. Since by Theorem \ref{testideal} every
element of degree two is a test element, it follows that $\ele
\not\in \ideal^*$ in $\ring_\alg$. By Proposition \ref{deform} this
implies that tight closure does not commute with localization.
\end{proof}

\ren{\elet}{{w}}

\section{A non-inclusion result}
\label{noninclusion}

Let $\alg$ denote a fixed non-zero element of $\fieldclo$, let
$\degalg$ and $\po=2^{\degalg-1}$ be as in Theorem \ref{theoremonetwo},
and set $\pof = \fourpo=  2^{\degalg +1}$. Let
$\ringo$ be the graded ring $\ringpoly/\maxfropof $, and let $\elerep$
be the element of $\ringo_{6 \pof +2}$ represented by
$(\testcu)(\varyz)^{3 \po}$. Our goal is to show that $\elerep
\not\in \equalg \ringo$, thereby proving Theorem \ref{complete}. Our arguments are close to those of
\cite{monskyhilbertkunzpointquartic}, and we assume familiarity with
that paper. We shall write $\equat$ for $\equalg$.

There is a map
$\ringo_{6 \po -5} \to \ringo_{6 \po -1}\oplus \ringo_{12\po -3}$ given by
$\eleu \mapsto (\eleu \equat, \eleu \elerep )$.
If we could show that this map is one to one we would be done. For
the proof of Theorem \ref{theoremonetwo} shows there is a $\elet\neq
0$ in $\ringo_{6 \po -5}$ with $\elet \equa = 0$. Then the one to
oneness shows that $\elet \elerep \neq 0$, and so
$\elerep \not\in \equa \ringo$.
Unfortunately, $\ringo_{6 \po -5} $ and $ \ringo_{6 \po
-1}$ are too large to allow an understanding of the map given above,
and as in
\cite{monskyhilbertkunzpointquartic}
one must proceed by replacing them by subquotients.
Before doing this we calculate some products in $\ringo$.

{\ren{\eler}{{r}}
\begin{lemma}
\label{twopowers}
Let $a_\expx$ be distinct powers of $2$ and $b_\expx$ be powers of
$2$. Suppose $\sum a_\expx b_\expx = 2^\eler-1$. Then each $b_\expx$ is
$1$.
\end{lemma}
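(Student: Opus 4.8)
The plan is to induct on $\eler$, equivalently on the size of the right‑hand side $2^\eler-1$. Write each $a_\expx=2^{\alpha_\expx}$ with the exponents $\alpha_\expx$ pairwise distinct (this is the hypothesis that the $a_\expx$ are distinct powers of $2$), and each $b_\expx=2^{\beta_\expx}$ with $\beta_\expx\geq 0$. Then $a_\expx b_\expx=2^{\alpha_\expx+\beta_\expx}$ and the assumption becomes $\sum_\expx 2^{\alpha_\expx+\beta_\expx}=2^\eler-1$. The base case $\eler=1$ is immediate: a sum of powers of $2$ (each $\geq 1$) equal to $1$ must have exactly one term, equal to $1$, so the unique $b_\expx$ is $1$; and if one allows $\eler=0$ the sum is empty and there is nothing to prove.

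The key step is a parity observation. Reducing $\sum_\expx 2^{\alpha_\expx+\beta_\expx}=2^\eler-1$ modulo $2$, the only summands contributing are those with $\alpha_\expx+\beta_\expx=0$, i.e.\ with $\alpha_\expx=\beta_\expx=0$; hence the number of such indices is odd, in particular at least one. But since the $\alpha_\expx$ are pairwise distinct, \emph{at most one} index can have $\alpha_\expx=0$. Therefore there is exactly one index, say $\expx_0$, with $\alpha_{\expx_0}=\beta_{\expx_0}=0$, that is $a_{\expx_0}=b_{\expx_0}=1$. This already gives the conclusion for $b_{\expx_0}$, and it also records that for every other index $\alpha_\expx+\beta_\expx\geq 1$ and, crucially, $\alpha_\expx\geq 1$ (again by distinctness, because $\alpha_{\expx_0}=0$ is already taken).

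Now I would peel off $\expx_0$ and halve. Subtracting $a_{\expx_0}b_{\expx_0}=1$ yields $\sum_{\expx\neq\expx_0}2^{\alpha_\expx+\beta_\expx}=2^\eler-2$, and since every exponent on the left is $\geq 1$ we may divide by $2$ to get $\sum_{\expx\neq\expx_0}2^{(\alpha_\expx-1)+\beta_\expx}=2^{\eler-1}-1$. Putting $a_\expx'=2^{\alpha_\expx-1}$ for $\expx\neq\expx_0$ (legitimate because $\alpha_\expx\geq 1$ there), the $a_\expx'$ are again pairwise distinct powers of $2$, the $b_\expx$ are unchanged, and the sum now equals $2^{\eler-1}-1$; so the inductive hypothesis applies and forces $b_\expx=1$ for all $\expx\neq\expx_0$ as well, completing the induction. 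I do not anticipate a genuine obstacle: the only point requiring care is verifying that after discarding $\expx_0$ all remaining exponents $\alpha_\expx+\beta_\expx$ are strictly positive so that halving keeps us inside powers of $2$ with distinct $\alpha$'s — and that is exactly where the distinctness of the $a_\expx$ gets used the second time.
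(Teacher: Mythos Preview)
Your proof is correct and follows essentially the same route as the paper's: induct on $\eler$, use parity to isolate a unique term with $a_{\expx_0}=b_{\expx_0}=1$, then halve the remaining $a_\expx$'s and apply the induction hypothesis. The paper's version is terser but the argument is identical.
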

\begin{proof}
We argue by induction on $\eler$, $\eler =1$ being trivial.
Since $2^\eler -1$ is odd and at most one $a_\expx$ is odd, we may
assume $a_1=b_1=1$. Then $\sum_{\expx >1} (\frac{a_\expx}{2})
b_\expx =2^{\eler-1}-1$, and induction gives the result.
\end{proof}
}

\begin{lemma}
\label{zexpand}
Suppose that $\elek < \po -1$. Then no monomial appearing with non-zero coefficient in the expansion of
$\poly^\elek$ can have $\po -1$ as the exponent of $\varz$.
\end{lemma}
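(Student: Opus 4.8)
The plan is to expand $\poly^\elek$ by iterating the Frobenius (we work in characteristic two throughout) and then to read off, purely combinatorially, which exponents of $\varz$ can occur; the asserted non-occurrence will then drop out of Lemma~\ref{twopowers}.

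First I would write the binary expansion $\elek = a_1 + \dots + a_s$ with the $a_\indi$ distinct powers of $2$. Since the ground field has characteristic two, $\poly^\elek = \prod_{\indi=1}^{s} \poly^{a_\indi}$, and applying the $a_\indi$-th Frobenius power to $\poly = \varz^4 + \varx\vary\varz^2 + \varx^3\varz + \vary^3\varz$ yields
$$\poly^{a_\indi} = \varz^{4a_\indi} + \varx^{a_\indi}\vary^{a_\indi}\varz^{2a_\indi} + \varx^{3a_\indi}\varz^{a_\indi} + \vary^{3a_\indi}\varz^{a_\indi}\,.$$
Consequently every monomial occurring in the expansion of $\poly^\elek$ is obtained by choosing one of the four displayed terms from each factor $\poly^{a_\indi}$ and multiplying, and the exponent of $\varz$ in it has the form $\sum_{\indi=1}^{s} b_\indi a_\indi$, where $b_\indi = 4$ if the term $\varz^{4a_\indi}$ is chosen, $b_\indi = 2$ if $\varx^{a_\indi}\vary^{a_\indi}\varz^{2a_\indi}$ is chosen, and $b_\indi = 1$ if one of $\varx^{3a_\indi}\varz^{a_\indi}$, $\vary^{3a_\indi}\varz^{a_\indi}$ is chosen.

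Now suppose, for a contradiction, that some monomial in this expansion has $\varz$-exponent $\po - 1 = 2^{\degalg - 1} - 1$. The $a_\indi$ are distinct powers of $2$ and the $b_\indi$ are powers of $2$, so Lemma~\ref{twopowers} (applied with $\eler = \degalg - 1$, so that $2^{\eler} - 1 = \po - 1$) forces $b_\indi = 1$ for every $\indi$; but then the $\varz$-exponent equals $\sum_{\indi} a_\indi = \elek$, giving $\elek = \po - 1$ and contradicting the hypothesis $\elek < \po - 1$. Thus no monomial at all in the expansion of $\poly^\elek$ has $\varz$-exponent $\po - 1$, and in particular none does with non-zero coefficient, which is the assertion. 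I do not expect a genuine obstacle here; the only point meriting care is to record that the three possible local contributions $1, 2, 4$ to the exponent of $\varz$ are all powers of $2$, which is precisely what allows Lemma~\ref{twopowers} to be invoked.
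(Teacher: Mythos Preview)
Your proof is correct and is essentially identical to the paper's own argument: both write $\elek$ in binary, use the Frobenius in characteristic two to factor $\poly^\elek$ as a product of $\poly^{a_i}$, observe that the $\varz$-exponents contributed are $a_i$, $2a_i$ or $4a_i$, and then invoke Lemma~\ref{twopowers} to force each $b_i=1$ and reach the contradiction $\elek=\po-1$. The only cosmetic difference is that the paper writes the cube terms together as $(\varx^3+\vary^3)\varz$ while you keep them separate.
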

\begin{proof}
Write $\elek = \sum a_\expx$, where the $a_\expx$
are distinct powers of $2$. Then
$$\poly^\elek= \prod (\polycubra)^{a_\expx} \, .$$
Expanding this product in powers of $\varz$ we see
that all the exponents of $\varz$ that appear are of the form $\sum
a_\expx b_\expx$ with each $b_\expx=1,2$ or $4$.
If $\sum a_\expx b_\expx = \po -1$, Lemma \ref{twopowers}
shows that $\elek= \sum a _\expx = \po -1$, contradicting our
hypothesis.
\end{proof}

\begin{lemma}
\label{monomexpand}
If $\elek < \po -1$ and $ \expx + \expy +4 \elek =6 \po -5$, then
$\elerep \cdot (\varx^\expx \vary^\expy \poly^\elek)=0$ in $\ringo$.
\end{lemma}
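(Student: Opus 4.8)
The plan is to prove the lemma by a direct monomial count, lifting everything to the polynomial ring $\ringpoly=\fieldclo[\varxyz]$ and then reducing modulo $\maxfropof$ (recall $\pof=4\po$).

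First I would record that $\elerep$ is represented by the single monomial $(\testcu)(\varyz)^{3\po}=\varx\vary^{3\po+1}\varz^{3\po}$, which is homogeneous of degree $6\po+2$. Consequently the product $\elerep\cdot(\varx^\expx\vary^\expy\poly^\elek)$ is represented in $\ringpoly$ by the polynomial $\varx^{\expx+1}\vary^{\expy+3\po+1}\varz^{3\po}\poly^\elek$, which is homogeneous of degree $(6\po+2)+(\expx+\expy+4\elek)=(6\po+2)+(6\po-5)=12\po-3$. Since $\maxfropof$ is a monomial ideal, it therefore suffices to show that every monomial $\varx^{e_1}\vary^{e_2}\varz^{e_3}$ occurring with non-zero coefficient in $\varx^{\expx+1}\vary^{\expy+3\po+1}\varz^{3\po}\poly^\elek$ already lies in $\maxfropof$, i.e.\ has $e_1\geq\pof$, $e_2\geq\pof$, or $e_3\geq\pof$.

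The heart of the argument is a degree count feeding into Lemma \ref{zexpand}. Since multiplication by the fixed monomial $\varx^{\expx+1}\vary^{\expy+3\po+1}\varz^{3\po}$ is a coefficient-preserving injection on $\ringpoly$, every surviving monomial of $\varx^{\expx+1}\vary^{\expy+3\po+1}\varz^{3\po}\poly^\elek$ has the form $\varx^{\expx+1}\vary^{\expy+3\po+1}\varz^{3\po}\,m$ with $m$ a monomial occurring with non-zero coefficient in $\poly^\elek$; in particular $e_3\geq 3\po$. Assume for contradiction that $e_1,e_2,e_3$ are all $\leq\pof-1=4\po-1$. Since $e_1+e_2+e_3=12\po-3=3(4\po-1)$, this forces $e_1=e_2=e_3=4\po-1$, so the exponent of $\varz$ in $m$ is $e_3-3\po=\po-1$. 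But $\elek<\po-1$, and Lemma \ref{zexpand} says that no monomial of $\poly^\elek$ can have $\po-1$ as the exponent of $\varz$, a contradiction. Hence every surviving monomial lies in $\maxfropof$ and the product vanishes in $\ringo$. I do not expect a genuine obstacle here: the only delicate point is that cancellations in characteristic two do not interfere, which is precisely what the coefficient-preserving injection remark handles, so that Lemma \ref{zexpand} applies directly to the monomials of $\poly^\elek$.
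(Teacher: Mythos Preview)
Your proof is correct and is essentially the same argument as the paper's: both observe that the product lives in degree $12\po-3=3(\pof-1)$, where the only monomial not killed by $\maxfropof$ is $(\varx\vary\varz)^{\pof-1}$, and then invoke Lemma~\ref{zexpand} to rule out the needed $\varz$-exponent $\po-1$ in $\poly^\elek$. The paper phrases this more compactly by saying the product equals $c(\varx\vary\varz)^{\pof-1}$ with $c$ the coefficient of $\varx^{\pof-2}\vary^{\po-2}\varz^{\po-1}$ in $\varx^\expx\vary^\expy\poly^\elek$, but your explicit monomial count unpacks the same reasoning.
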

\begin{proof}
Since $\elerep= \varx \vary^{3 \po +1} \varz^{3 \po}$
and $\elerep \cdot (\monom )$ lies in
$\ringo_{12\po -3}$, it is $\coef (\varx\vary\varz)^{\pof-1}$ where $\coef$
is the coefficient of $\varx^{\pof-2} \vary^{\po -2} \varz^{\po -1}$
in $ (\monom)$. As $ \elek < \po -1$, Lemma \ref{zexpand} shows that
$\coef =0$.
\end{proof}

We next recall some definitions from
\cite{monskyhilbertkunzpointquartic}.

\begin{definition} (Definition 1.5 in \cite{monskyhilbertkunzpointquartic})
$\polyr =\sum \monomtri$, the sum extending over all triples $\triple$
with $\expz =1$ or $2$, with $\expx+ \expy+ \expz=\pof$ and
$\expx \equiv \expy $ mod $3$. $\polys = \sum \monomvar$, the sum extending
over all triples $\triplevar$ with $\povar$ dividing $\pof/8$,
$\expx+\expy+4\povar = \pof$ and $\expx \equiv \expy $ mod $12 \povar$.
\end{definition}

\begin{theorem}
In $\ringo$, $\poly^\po=\polyr+\polys$ and $\varx^3\polyr = \vary^3
\polyr$.
\end{theorem}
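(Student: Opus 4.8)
The plan is to prove the two identities separately; in both cases the strategy is to expand everything explicitly and then reduce modulo $\maxfropof$. For the first identity I would begin from the fact that $\po=2^{\degalg-1}$ is a power of $2$ and $\fieldclo$ has characteristic $2$, so the $\po$-th power map is a ring endomorphism; hence in $\fieldclo[\varx,\vary,\varz]$
\[
\poly^\po=(\varz^4)^\po+(\varx\vary\varz^2)^\po+(\varx^3\varz)^\po+(\vary^3\varz)^\po
=\varz^{4\po}+\varx^\po\vary^\po\varz^{2\po}+\varx^{3\po}\varz^\po+\vary^{3\po}\varz^\po,
\]
and in $\ringo$ the term $\varz^{4\po}$ vanishes. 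It then remains to check that $\polyr+\polys$ reduces to the same element of $\ringo$. I would expand the defining sum of $\polys$, using once more that each admissible $\povar$ (a divisor of $\pof/8$) is a power of $2$, so that $\poly^\povar=\varz^{4\povar}+\varx^\povar\vary^\povar\varz^{2\povar}+\varx^{3\povar}\varz^\povar+\vary^{3\povar}\varz^\povar$, add $\polyr$, discard every monomial having an exponent $\geq\pof$, and check that the survivors cancel in pairs in characteristic $2$ down to the three-term expression above; the divisibility condition $\povar\mid\pof/8$ and the congruences $\expx\equiv\expy\bmod 3$ and $\expx\equiv\expy\bmod 12\povar$ in the definitions of $\polyr$ and $\polys$ are exactly what makes this cancellation work out. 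Lemma \ref{twopowers}, applied as in the proof of Lemma \ref{zexpand}, is what pins down which $\varz$-exponents can occur, and the remaining bookkeeping is the corresponding computation of \cite{monskyhilbertkunzpointquartic}.

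For $\varx^3\polyr=\vary^3\polyr$ I would argue by symmetrization on the set $J$ of triples $\triple$ defining $\polyr$. The conditions cutting out $J$ --- the constraint on $\expz$, the relation $\expx+\expy+\expz=\pof$, and the congruence $\expx\equiv\expy\bmod 3$ --- are stable both under the transposition $\expx\leftrightarrow\expy$ and under the shift $(\expx,\expy)\mapsto(\expx+3,\expy-3)$, applicable whenever $\expy\geq 3$. Using these two symmetries one matches the monomials occurring in $\varx^3\polyr$ with those occurring in $\vary^3\polyr$ bijectively, the only exceptions being the ``boundary'' monomials coming from triples with $\expy<3$ (respectively $\expx<3$); for those one checks that the shift pushes an exponent up to $\pof$, so the monomial already vanishes in $\ringo$, a single borderline case being excluded by the congruence conditions. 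This yields $\varx^3\polyr=\vary^3\polyr$ in $\ringo$.

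The genuine obstacle is the ``collapse'' in the first identity: $\poly^\po$ and $\polyr+\polys$ are honestly different polynomials of degree $\pof$, and the whole content of the statement is that their difference lies in $\maxfropof$. That is exactly where Monsky's combinatorics of sums of powers of $2$ together with the congruence conditions in the definitions of $\polyr$ and $\polys$ must be unwound with care; by comparison the Frobenius expansions and the symmetrization used for $\varx^3\polyr=\vary^3\polyr$ are essentially mechanical.
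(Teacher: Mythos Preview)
The paper's proof is a bare citation: ``This is Theorem 1.9 of \cite{monskyhilbertkunzpointquartic}.'' So you are going further than the paper does by actually sketching an argument.

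Your treatment of $\varx^3\polyr=\vary^3\polyr$ is correct and self-contained. The shift $(\expx,\expy)\mapsto(\expx+3,\expy-3)$ matches the monomials of $\varx^3\polyr$ bijectively with those of $\vary^3\polyr$ away from the boundary; the boundary triples with $\expy\le 2$ (respectively $\expx\le 2$) either push an exponent to at least $\pof$ and hence vanish in $\ringo$, or---in the single surviving case $\expy=\expz=2$---are excluded because $\pof$ is a power of $2$, so $\pof-4\not\equiv 2\bmod 3$ and the condition $\expx\equiv\expy\bmod 3$ fails. This is a clean direct argument.

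For $\poly^\po=\polyr+\polys$ your plan---compute $\poly^\po$ via Frobenius, kill $\varz^\pof$, expand each $\poly^{\povar}$ occurring in $\polys$ by Frobenius again, and organize the cancellation with $\polyr$ modulo $\maxfropof$---is the natural one, and you are right that the content lies in the telescoping over the divisors $\povar\mid\pof/8$. But you explicitly hand the actual cancellation off to \cite{monskyhilbertkunzpointquartic}, which is precisely what the present paper does, so there is no independent proof here on either side. One minor quibble: Lemmas~\ref{twopowers} and~\ref{zexpand} analyze which $\varz$-exponents occur in $\poly^k$ for $k<\po-1$ and are used in the paper for a different purpose (Lemma~\ref{monomexpand}); they are not the mechanism behind the identity $\poly^\po=\polyr+\polys$, whose proof in \cite{monskyhilbertkunzpointquartic} is rather an inductive/telescoping computation in the $\povar$'s. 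Invoking them here is slightly misdirected, though harmless since you defer to the reference anyway.
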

\begin{proof}
This is Theorem 1.9 of \cite{monskyhilbertkunzpointquartic}.
\end{proof}

\begin{definition} (Definitions 3.2 and 3.3 in \cite{monskyhilbertkunzpointquartic})
\begin{enumerate}
\item
$\subw \subseteq \ringo_{6\po-5}$ is spanned by the
$\monom$ with $\expx+ \expy+4\elek=6 \po -5$ and $\elek \leq \po -1$.

\item
$\subwsp \subseteq \subw $ is spanned by $\monom$ with
$\expx+ \expy+4\elek=6 \po -5$ and $\elek < \po -1$.
\end{enumerate}
\end{definition}

Note that $\elerep \cdot \subwsp =0$ by Lemma \ref{monomexpand}.
So $\eleu \mapsto \eleu\elerep$ is a well-defined map $\subw/\subwsp
\to \ringo_{12\po -3}$.

\begin{definition} (Definition 3.4 a) in \cite{monskyhilbertkunzpointquartic})
\label{subddef}
$\subd$ is the subspace of $\ringo_{6\po-1}$
spanned by the $\varx^\expx \vary^\expy \polyr$ with $\expx+\expy =
2 \po -1$.
\end{definition}

Now $\varx^3 \polyr = \vary^3 \polyr $ in $\ringo$.
The numbers $\po-2$ and $2\po-2$ ($\po \geq 2$) represent $0$ and
$2$ (or $2$ and $0$) mod $3$; hence each $\varxex\varyex\polyr$ as
in Definition \ref{subddef} is either $\varx^{\po-2}
\vary^{\po+1}\ringr$, $\varx^{2\po-2} \vary \ringr$ or $\varx
\vary^{2 \po -2} \ringr $.

\begin{definition}
\label{subwprime}
$\subwpri \subseteq \ringo_{6\po-1}$ is spanned by $\subd$ together
with the $\monom$ with $\expox+\expoy+4\elek = 6 \po -1$ and $\elek
\leq \po -1$.
\end{definition}

\begin{lemma}
\label{inclusion}
$\equa \cdot \subw \subseteq \subwpri$.
\end{lemma}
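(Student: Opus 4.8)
The plan is to check the inclusion on the spanning monomials $\varx^\expx\vary^\expy\poly^\elek$ of $\subw$ (those with $\expx+\expy+4\elek=6\po-5$ and $\elek\le\po-1$), which suffices since $\equa\cdot(-)$ and the subspaces involved are $\fieldclo$-linear. Writing $\equa=\poly+\alg\varx^2\vary^2$ gives
$$\equa\cdot\varx^\expx\vary^\expy\poly^\elek=\varx^\expx\vary^\expy\poly^{\elek+1}+\alg\,\varx^{\expx+2}\vary^{\expy+2}\poly^\elek,$$
and both summands lie in $\ringo_{6\po-1}$. The second summand has $\poly$-exponent $\elek\le\po-1$ and exponent sum $(\expx+2)+(\expy+2)+4\elek=6\po-1$, so it is one of the monomials listed in Definition \ref{subwprime} as spanning $\subwpri$; the first summand is such a monomial as well whenever $\elek+1\le\po-1$. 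Hence the whole statement reduces to the single borderline case $\elek=\po-1$, where $\expx+\expy=6\po-5-4(\po-1)=2\po-1$ and the $\poly$-power jumps to $\poly^\po$, which is no longer covered by that list — this is the one real obstacle.

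To handle it I would invoke the identity $\poly^\po=\polyr+\polys$ in $\ringo$ (Theorem~1.9 of \cite{monskyhilbertkunzpointquartic}, recalled above), so that $\varx^\expx\vary^\expy\poly^\po=\varx^\expx\vary^\expy\polyr+\varx^\expx\vary^\expy\polys$. Since $\expx+\expy=2\po-1$, the term $\varx^\expx\vary^\expy\polyr$ is exactly one of the generators of $\subd$ from Definition \ref{subddef}, hence lies in $\subwpri$. For the remaining term I would use the explicit shape of $\polys$: it is a sum of monomials $\varx^{\expx'}\vary^{\expy'}\poly^{\povar}$ with $\povar$ a positive divisor of $\pof/8=\po/2$ and $\expx'+\expy'+4\povar=\pof=4\po$. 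Because $\alg\neq0$ forces $\degalg\ge2$, we have $\po=2^{\degalg-1}\ge2$, so $1\le\povar\le\po/2\le\po-1$. Multiplying by $\varx^\expx\vary^\expy$ (with $\expx+\expy=2\po-1$) yields a sum of monomials $\varx^{\expx+\expx'}\vary^{\expy+\expy'}\poly^{\povar}$ of exponent sum $(2\po-1)+(4\po-4\povar)+4\povar=6\po-1$ and $\poly$-exponent $\povar\le\po-1$; each of these appears in the spanning list of Definition \ref{subwprime}, so $\varx^\expx\vary^\expy\polys\in\subwpri$.

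This completes the case $\elek=\po-1$ and with it the lemma. Apart from that case everything is pure degree bookkeeping; the only steps deserving care are keeping track that each product really satisfies the exponent conditions of Definition \ref{subwprime} and the observation that $\po\ge2$, which is what keeps $\povar$ strictly below $\po$ when the $S$-part is re-expanded. The essential input — and the genuinely nontrivial fact — is the reduction $\poly^\po=\polyr+\polys$, which simultaneously lowers the $\poly$-power back below $\po$ (via $\polys$) and supplies the $\subd$-contribution (via $\polyr$).
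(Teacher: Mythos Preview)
Your argument is correct and complete. The paper itself gives no direct proof here: it simply cites Theorem~3.5 of \cite{monskyhilbertkunzpointquartic}. What you have written is essentially a reconstruction of that argument --- splitting $\equa=\poly+\alg\varx^2\vary^2$, handling the easy cases $\elek\le\po-2$ by degree bookkeeping, and in the borderline case $\elek=\po-1$ invoking $\poly^\po=\polyr+\polys$ to land the $\polyr$-part in $\subd$ and the $\polys$-part back among the admissible monomials (using $\povar\le\po/2\le\po-1$, which is where $\po\ge2$ enters). So your route and the cited one coincide; the only difference is that you have made the argument self-contained rather than deferring to the reference.
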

\begin{proof}
This is Theorem 3.5 of  \cite{monskyhilbertkunzpointquartic}.
\end{proof}

\begin{lemma}
\label{kernel}
The map $\subw/\subwsp \to \subwpri/\equa \subwsp$ induced by
multiplication by $\equa$ has one dimensional kernel.
\end{lemma}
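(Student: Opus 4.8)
The plan is to relate the kernel of the induced map $\spacequotmap$ to the kernel $K$ of multiplication by $\equa$ on the whole of $\ringo_{6\po-5}$, using that $K$ is one-dimensional. The one-dimensionality is already in hand: in the proof of Theorem \ref{theoremonetwo} it is shown, via Lemma 4.15 of \cite{monskyhilbertkunzpointquartic}, that multiplication by $\equa$ from $\ringo_{6\po-5}$ to $\ringo_{6\po-1}$ has a one-dimensional kernel; write $K=\fieldclo\elet$ for it, with $\elet\neq0$. (Equivalently, under the perfect pairing $\ringo_\eled\times\ringo_{3\pof-3-\eled}\to\ringo_{3\pof-3}$ the space $K$ is dual to the one-dimensional cokernel of multiplication by $\equa$ from $\ringo_{6\po-2}$ to $\ringo_{6\po+2}$ asserted by Theorem \ref{theoremonetwo}.) Note that $\spacequotmap$ makes sense because $\equa\cdot\subw\subseteq\subwpri$ by Lemma \ref{inclusion}, and hence $\equa\cdot\subwsp\subseteq\subwpri$ as well, since $\subwsp\subseteq\subw$.

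Next I would carry out the reduction. An element $\bar\eleu\in\subw/\subwsp$ lies in the kernel iff $\equa\eleu\in\equa\cdot\subwsp$, i.e. iff $\equa\eleu=\equa\eleu'$ for some $\eleu'\in\subwsp$, i.e. iff $\eleu-\eleu'\in K$ for some $\eleu'\in\subwsp$, i.e. iff $\eleu\in\subwsp+K$. So the kernel equals $\bigl(\subw\cap(\subwsp+K)\bigr)/\subwsp$, and since $\subwsp\subseteq\subw$ the modular law gives
$$\subw\cap(\subwsp+K)\;=\;\subwsp+(\subw\cap K)\,,$$
whence the kernel is isomorphic to $(\subw\cap K)/(\subwsp\cap K)$, of dimension $\dim_{\fieldclo}(\subw\cap K)-\dim_{\fieldclo}(\subwsp\cap K)$. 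Since $\subwsp\cap K\subseteq\subw\cap K\subseteq K$ and $K$ is a line, this dimension is $1$ exactly when $\elet\in\subw$ and $\elet\notin\subwsp$, and is $0$ otherwise. Thus the lemma comes down to two assertions: (i) $K\subseteq\subw$; and (ii) multiplication by $\equa$ is injective on $\subwsp$ (equivalently $\subwsp\cap K=0$, equivalently $\elet\notin\subwsp$).

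For (ii) I would argue by descending induction on the exponent of $\poly$ that appears. If $\sum c_{\expx\expy\elek}\varx^\expx\vary^\expy\poly^\elek\in\subwsp$ (so $\expx+\expy+4\elek=6\po-5$ and $\elek\leq\po-2$ throughout) is annihilated by $\equa=\poly+\alg\varx^2\vary^2$, then $\sum c_{\expx\expy\elek}\bigl(\varx^\expx\vary^\expy\poly^{\elek+1}+\alg\varx^{\expx+2}\vary^{\expy+2}\poly^\elek\bigr)=0$ in $\ringo_{6\po-1}$, an identity in which only powers $\poly^\elec$ with $\elec\leq\po-1$ occur; peeling off the coefficients level by level — starting from $\poly^{\po-1}$, which receives only the terms with $\elek=\po-2$ — forces them all to vanish. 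This step rests on the linear independence in $\ringo$ of the relevant products $\varx^\elea\vary^\eleb\poly^\elec$ with $\elec\leq\po-1$, which is part of the combinatorial analysis of $\ringo$ in \cite{monskyhilbertkunzpointquartic}, the relation $\poly^\po=\polyr+\polys$ not yet being felt there. The genuine obstacle is (i): one must locate the one-dimensional kernel $K$ of $\equa$ on $\ringo_{6\po-5}$ inside the subspace $\subw$ spanned by the $\varx^\expx\vary^\expy\poly^\elek$ with $\elek\leq\po-1$. This is not formal, and I would read it off from the explicit shape of the kernel vector produced while proving Lemma 4.15 of \cite{monskyhilbertkunzpointquartic}; indeed $\subw$, $\subwsp$ and $\subwpri$ were set up there precisely so that the action of $\equa$ around degree $6\po$ is faithfully carried by $\spacequotmap$. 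Granting (i) and (ii), the displayed computation yields the lemma and, moreover, exhibits the kernel as $\fieldclo\bar\elet$ with $\elet$ the genuine kernel vector of $\equa$ on $\ringo_{6\po-5}$ — exactly what is wanted afterwards to conclude $\elet\elerep\neq0$.
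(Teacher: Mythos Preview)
Your argument is correct and essentially the same as the paper's: both use that $K=\ker(\equa:\ringo_{6\po-5}\to\ringo_{6\po-1})$ is one-dimensional and that the kernel of the induced map on subquotients is a nonzero subspace of $K$, with both facts deferred to \cite{monskyhilbertkunzpointquartic}. The paper is just more economical, citing Theorem~3.10 there directly for the embedding into $K$ (your (ii)) and Theorems~3.10 and~3.1(2) for non-triviality (your (i)), rather than unpacking the modular-law computation and then deferring (i) and (ii) anyway; your peeling sketch for (ii) is plausible but is not the form in which the result is packaged in \cite{monskyhilbertkunzpointquartic}.
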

\begin{proof}
Theorems 3.10 and 3.1(2) of \cite{monskyhilbertkunzpointquartic}
show that the kernel is non-trivial.
Let $\subn_{6\po -5}$ denote
the kernel of the multiplication by $\equa$ map
$\ringo_{6\po -5} \to  \ringo_{6\po -1}$. We have mentioned in the proof of Theorem \ref{theoremonetwo}
that
$\subn_{6\po -5}$ has dimension $1$. Theorem 3.10 of \cite{monskyhilbertkunzpointquartic}
shows that the kernel of
$\subw/\subwsp \to \subwpri/\equa \subwsp$
identifies with a subspace of $\subn_{6\po -5}$, giving the result.
\end{proof}

\ren{\varu}{{u}}

We now have a more promising approach to showing that $\elerep
\notin \equa \ringo$. Consider the map
$$\subw/\subwsp \to \subwpri/\equa \subwsp \oplus \ringo_{12\po -
3}$$
induced by $\eleu \mapsto (\eleu \equa, \eleu \elerep)$. It
suffices to show that this map is one to one.
For suppose that $\elerep \in \equa \ringo$.
If we take $\elet
\neqz$ in the kernel of $\subw/\subwsp \to \subwpri/\equa \subwsp$,
then
$\elet \mapsto (0,0)$, a contradiction.
It is indeed practical to write down the matrix of the map
$\subw/\subwsp \to \subwpri/\equa \subwsp \oplus \ringo_{12\po-3}$,
but it is a bit more convenient to replace $\subw/\subwsp$ and
$\subwpri/\equa \subwsp$ by subspaces $\subx$ and $\suby$ of
dimensions $\po$ and $\po +1$ respectively.
We first describe bases of $\subw/\subwsp$ and
$\subwpri/\equa \subwsp$.

\begin{lemma}
\label{lemmabasis}
Let $\base =\varxex \varyex \poly^{\po-1}$
where $0 \leq \expox \leq 2 \po -1$ and $\expox +\expoy = 2 \po -1$. Then the $\base$
represent a basis of $\spacequot$.
\end{lemma}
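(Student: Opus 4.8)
The plan is to separate the two halves of the assertion --- spanning, which is immediate, and linear independence, which is the content --- and to prove independence by exhibiting an explicit dual system of coordinate functionals on $\ringo_{6\po-5}$. Throughout I write $E_i:=\varx^{i}\vary^{2\po-1-i}\poly^{\po-1}$ for $0\le i\le 2\po-1$; these are the elements $\base$ of the statement (and indeed $i+(2\po-1-i)+4(\po-1)=6\po-5$, so $E_i\in\ringo_{6\po-5}$).

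For spanning I would just unwind the definitions. $\subw$ is spanned by the elements $\varx^{a}\vary^{b}\poly^{\elek}$ with $a+b+4\elek=6\po-5$ and $\elek\le\po-1$; those with $\elek\le\po-2$ lie in $\subwsp$, while those with $\elek=\po-1$ force $a+b=2\po-1$, hence are exactly the $E_i$. Thus the images of $E_0,\dots,E_{2\po-1}$ span $\spacequot$, and it remains only to prove these $2\po$ images are $\fieldclo$-linearly independent.

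For independence I would work with the $\fieldclo$-basis of $\ringo$ given by the monomials $\varx^{a}\vary^{b}\varz^{c}$ with $0\le a,b,c<4\po$, and introduce for each $i$ with $0\le i\le 2\po-1$ the linear functional $\lambda_i\colon\ringo_{6\po-5}\to\fieldclo$ reading off the coefficient of $\varx^{i}\vary^{2\po-1-i}\varz^{4\po-4}$ (whose exponents are all $<4\po$ and sum to $6\po-5$, so it is a genuine basis monomial). Then I would check two things. First, $\lambda_i(E_{i'})=\delta_{ii'}$: writing $\poly=\varz^{4}+(\text{terms of }\varz\text{-degree}\le 2)$, the polynomial $\poly^{\po-1}$ is homogeneous of degree $4\po-4$ and its only monomial of $\varz$-degree $\ge 4\po-4$ is $\varz^{4\po-4}$, with coefficient $1$; multiplying by the $\varz$-free monomial $\varx^{i'}\vary^{2\po-1-i'}$ and reducing modulo $(\varx^{4\po},\vary^{4\po},\varz^{4\po})$ changes no $\varz$-degrees, so $\varx^{i'}\vary^{2\po-1-i'}\varz^{4\po-4}$ is the unique monomial of $\varz$-degree $4\po-4$ occurring in $E_{i'}$ (with coefficient $1$), giving the claim. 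Second, $\lambda_i$ kills $\subwsp$: every spanning element $\varx^{a}\vary^{b}\poly^{\elek}$ of $\subwsp$ has $\elek\le\po-2$, hence all of its monomials --- before and after reduction in $\ringo$ --- have $\varz$-degree $\le 4\elek\le 4\po-8<4\po-4$. From these two facts, $\sum_{i'}c_{i'}E_{i'}\in\subwsp$ implies $c_i=\lambda_i\!\big(\sum_{i'}c_{i'}E_{i'}\big)=0$ for every $i$; so the $E_i$ are independent modulo $\subwsp$ and form a basis of $\spacequot$, which in particular then has dimension $2\po$.

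I do not anticipate a genuine obstacle. The only point deserving care is that one works in the truncated ring $\ringo$ rather than in $\fieldclo[\varx,\vary,\varz]$, so one must be sure the reductions modulo $\varx^{4\po},\vary^{4\po},\varz^{4\po}$ do not disturb the $\varz$-degree bookkeeping; they do not, since reduction by $\varx^{4\po}$ or $\vary^{4\po}$ does not affect $\varz$-degrees and the relevant value $4\po-4$ lies safely below the truncation exponent $4\po$. Conceptually this merely records that the $E_i$ are the natural "top power of $\varz$" representatives --- it is the $E_i$-basis used in Section~3 of \cite{monskyhilbertkunzpointquartic}, and Lemma~\ref{zexpand} is the parallel statement about the $\varz$-exponent $\po-1$.
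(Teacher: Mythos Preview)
Your proof is correct. The paper itself gives no argument here---it simply cites Theorem 3.11 of \cite{monskyhilbertkunzpointquartic}---so your approach is genuinely more self-contained. Your method is the natural one: the $\varz$-degree gives a filtration on $\subw$, the generators of $\subwsp$ all have $\varz$-degree at most $4\po-8$, and the $E_i$ have distinct leading monomials $\varx^{i}\vary^{2\po-1-i}\varz^{4\po-4}$ in the top filtration piece, so the dual functionals $\lambda_i$ separate them modulo $\subwsp$. Your check that reduction modulo $(\varx^{4\po},\vary^{4\po},\varz^{4\po})$ does not disturb the $\varz$-degree bookkeeping is exactly the point that needs care, and you handle it correctly (the relevant $\varz$-exponent $4\po-4$ lies below the truncation, and killing high-$\varx$ or high-$\vary$ monomials cannot create new $\varz$-degree-$4\po-4$ terms). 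What the citation buys is brevity; what your argument buys is that the reader need not consult \cite{monskyhilbertkunzpointquartic} for this particular fact, and it makes transparent why $\dim(\spacequot)=2\po$.
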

\begin{proof}
This is Theorem 3.11 of
\cite{monskyhilbertkunzpointquartic}.
\end{proof}

\begin{lemma}
\label{lemmabasispri}
Let $\baspri =\varx^{2\po+\expox} \vary^{2\po +\expoy}$
where $0 \leq \expox \leq 2 \po -1$ and $\expox +\expoy = 2 \po -1$. Then the
$\baspri$, together with $\varx^{2 \po-2}\vary^{}\polyr $, $\varx^{}\vary^{2\po-2}\polyr
$ and $\varx^{\po-2}\vary^{\po+1}\polyr $
represent a basis of $\spacequotpri$.
\end{lemma}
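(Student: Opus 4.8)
The plan is to establish the two things a basis requires: that the listed $2\po+3$ elements span $\spacequotpri=\subwpri/\equa \subwsp$, and that $\dim\spacequotpri$ equals $2\po+3$ (a spanning set whose size matches the dimension is automatically a basis).

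\emph{Spanning.} Recall that $\subwpri$ is generated by $\subd$ together with the monomials $\varx^\expx\vary^\expy\poly^\elek$ with $\expx+\expy+4\elek=6\po-1$ and $\elek\le\po-1$. Since $\equa=\poly+\alg\varx^2\vary^2$ in $\ringo$,
\[
\varx^\expx\vary^\expy\poly^\elek=\alg\,\varx^{\expx+2}\vary^{\expy+2}\poly^{\elek-1}+\equa\cdot\big(\varx^\expx\vary^\expy\poly^{\elek-1}\big).
\]
When $\varx^\expx\vary^\expy\poly^\elek$ is a generator of $\subwpri$ with $\elek\ge1$, the factor $\varx^\expx\vary^\expy\poly^{\elek-1}$ — and every monomial produced by iterating this identity — has $\poly$-exponent $\le\po-2$ and lies in degree $6\po-5$, hence belongs to $\subwsp$; thus, modulo $\equa \subwsp$,
\[
\varx^\expx\vary^\expy\poly^\elek\equiv\alg^{\elek}\,\varx^{\expx+2\elek}\vary^{\expy+2\elek},
\]
a pure monomial of degree $6\po-1$ which is either $0$ in $\ringo$ or one of the $\baspri$. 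Moreover, by the relation $\varx^3\polyr=\vary^3\polyr$ each generator $\varx^\expx\vary^\expy\polyr$ of $\subd$ equals one of $\varx^{2\po-2}\vary\polyr$, $\varx\vary^{2\po-2}\polyr$, $\varx^{\po-2}\vary^{\po+1}\polyr$ (see the discussion after Definition \ref{subddef}). Hence the listed $2\po+3$ elements span $\spacequotpri$, so $\dim\spacequotpri\le 2\po+3$.

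\emph{The dimension is $2\po+3$.} First I would verify that multiplication by $\equa$ is injective on $\subwsp$. Write $\phi\colon\spacequot\to\spacequotpri$ for the map induced by $\equa$, and let $\subn_{6\po-5}$ be the kernel of multiplication by $\equa$ from $\ringo_{6\po-5}$ to $\ringo_{6\po-1}$. A class in $\ker\phi$ is represented by some $w\in\subw$ with $\equa w\in\equa \subwsp$, i.e.\ with $w\in\subwsp+\subn_{6\po-5}$; hence $\dim\ker\phi\le\dim\subn_{6\po-5}-\dim(\subn_{6\po-5}\cap\subwsp)$. Since $\dim\ker\phi=1$ (Lemma \ref{kernel}) and $\dim\subn_{6\po-5}=1$ (recorded in the proofs of Theorem \ref{theoremonetwo} and Lemma \ref{kernel}), this forces $\subn_{6\po-5}\cap\subwsp=0$. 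Therefore $\dim(\equa \subwsp)=\dim\subwsp$, and, using $\equa \subwsp\subseteq\subwpri$ (Lemma \ref{inclusion}), $\dim\spacequotpri=\dim\subwpri-\dim\subwsp$. It then remains to evaluate $\dim\subwpri-\dim\subwsp$; by the explicit monomial bookkeeping of \cite{monskyhilbertkunzpointquartic} — where the generating sets of $\subwsp$ and of $\subwpri$, suitably arranged, are shown to be bases, exactly as for the basis of $\spacequot$ in Lemma \ref{lemmabasis} — this difference equals $2\po+3$. Combined with the spanning step, this exhibits the listed $2\po+3$ elements as a basis of $\spacequotpri$.

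The spanning half is formal, using only $\equa=\poly+\alg\varx^2\vary^2$, the relation $\varx^3\polyr=\vary^3\polyr$, and (through $\subd$) the decomposition $\poly^\po=\polyr+\polys$. The hard part will be the evaluation $\dim\subwpri-\dim\subwsp=2\po+3$: it rests on knowing exactly which of the monomials $\varx^\expx\vary^\expy\poly^\elek$ are linearly independent in the degrees $6\po-5$ and $6\po-1$ of $\ringo$, and precisely how $\poly^\po=\polyr+\polys$ mixes these monomials — that is, the combinatorial analysis of \cite{monskyhilbertkunzpointquartic} on which this whole section is built.
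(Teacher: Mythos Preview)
Your spanning argument is correct and is a nice direct reduction; the paper does not argue spanning this way. Your deduction that $g$ is injective on $W_0$ from $\dim\ker\phi=1$ and $\dim N_{6Q-5}=1$ is also correct.

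The gap is your step~3. You assert that ``the explicit monomial bookkeeping of \cite{monskyhilbertkunzpointquartic}'' gives $\dim W'-\dim W_0=2Q+3$, but you do not point to any result there computing $\dim W'$ or $\dim W_0$ separately, and the paper's own proof does not use such a computation. Determining $\dim W'$ directly would require controlling all linear relations among the $x^iy^jP^k$ in degree $6Q-1$ \emph{together with} the elements of $D$, which involves exactly the decomposition $P^Q=R+S$; this is not a routine count. The paper sidesteps this entirely: it cites Theorems~2.8 and~3.14 of \cite{monskyhilbertkunzpointquartic} to get that the map $W/W_0\to W'/(gW_0+D)$ has $4$-dimensional kernel, compares this with the $1$-dimensional kernel of $W/W_0\to W'/gW_0$ from Lemma~\ref{kernel}, and (using $\dim D\le 3$) concludes that the image of $D$ in $W'/gW_0$ has dimension exactly~$3$. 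Combined with Theorem~3.11 of \cite{monskyhilbertkunzpointquartic}, which says the $F_i$ form a basis of $W'/(gW_0+D)$, this yields $\dim(W'/gW_0)=2Q+3$ and hence the claimed basis.

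So your spanning half can stand as is, but for the dimension half you should replace the unproven claim about $\dim W'-\dim W_0$ with the kernel comparison $4$ vs.\ $1$ and the known $\dim D\le 3$. With that substitution your argument becomes complete and is then essentially a repackaging of the paper's proof: your spanning step shows the $F_i$ and the three $D$-elements generate, while the kernel comparison shows they are $2Q+3$ in number with no redundancy.
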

\begin{proof}
Theorem 2.8 and 3.14 of \cite{monskyhilbertkunzpointquartic}
tell us that the map $\spacequot \to \subwpri/(\equa \subwsp+\subd)$
induced by multiplication by $\equa$ has  $4$ dimensional kernel.
Combining this with Lemma \ref{kernel} above
we see that the image of $\subd$ in $\spacequotpri$ has dimension
$3$, and that one gets a basis of $\spacequotpri$ by taking elements
of $\subwpri$ representing a basis of $\subwpri/(\equa
\subwsp+\subd)$, and adding $3$ elements that span $\subd$.
But Theorem 3.11 of \cite{monskyhilbertkunzpointquartic}
shows that the $\baspri$ represent a basis of
$\subwpri/(\equa \subwsp+\subd)$.
\end{proof}

\begin{lemma}
\label{imagemult}
The image of $E_\elek$ under the multiplication by $\equa$ map
$\spacequot \to \spacequotpri$ is
$$\alg^\po F_\elek + \sum \alg^\pofsp F_\ell +(\mbox{the element }
\varx^\elek\vary^{2\po -1-\elek} \polyr
\mbox{ of } \subd) \, ,$$ where the sum extends over all pairs $( \pofsp, \ell)$
with $\pofsp $ dividing $\po/2$ and $\ell \equiv \elek $ mod $6
\pofsp$.
\end{lemma}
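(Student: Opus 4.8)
The plan is to compute $\equat E_\elek$ explicitly as an element of $\ringo_{6\po-1}$ (it lies in $\subwpri$ by Lemma~\ref{inclusion}, since $E_\elek\in\subw$) and then simplify it in $\spacequotpri$ by peeling off factors of $\poly$ modulo $\equat\subwsp$. Writing $\expoy=2\po-1-\elek$ and $\equat=\poly+\alg\varx^2\vary^2$, one has
$$\equat E_\elek \;=\; \varx^\elek\vary^\expoy\poly^\po \;+\; \alg\,\varx^{\elek+2}\vary^{\expoy+2}\poly^{\po-1}\,,$$
and applying the relation $\poly^\po=\polyr+\polys$ splits the first summand as $\varx^\elek\vary^\expoy\polyr+\varx^\elek\vary^\expoy\polys$. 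Because $\elek+\expoy=2\po-1$, the term $\varx^\elek\vary^\expoy\polyr$ is already the element of $\subd$ named in the statement (it is one of the three basis representatives of Lemma~\ref{lemmabasispri} after using $\varx^3\polyr=\vary^3\polyr$). So the problem reduces to identifying $\alg\,\varx^{\elek+2}\vary^{\expoy+2}\poly^{\po-1}$ and $\varx^\elek\vary^\expoy\polys$ in $\spacequotpri$ with $\alg^\po F_\elek$ and $\sum\alg^\pofsp F_\ell$ respectively.

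The workhorse is a one-step reduction: if $\varx^\numa\vary^\numb\poly^\numn$ has degree $6\po-1$ and $1\le\numn\le\po-1$, then $\varx^\numa\vary^\numb\poly^{\numn-1}\in\subwsp$, so
$$\varx^\numa\vary^\numb\poly^\numn \;\equiv\; \alg\,\varx^{\numa+2}\vary^{\numb+2}\poly^{\numn-1}\pmod{\equat\subwsp}\,,$$
and iterating down to exponent $0$ gives $\varx^\numa\vary^\numb\poly^\numn\equiv\alg^\numn\varx^{\numa+2\numn}\vary^{\numb+2\numn}$. I would combine this with the elementary fact that a pure monomial of degree $6\po-1$ is nonzero in $\ringo$ exactly when both exponents lie in $[2\po,\,4\po-1]$, and is then one of the basis monomials $F_\ell$. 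Applied to the term $\alg\,\varx^{\elek+2}\vary^{\expoy+2}\poly^{\po-1}$ (legitimate since $\po-1\le\po-1$) this yields $\alg^\po\varx^{\elek+2\po}\vary^{\expoy+2\po}=\alg^\po F_\elek$, using $0\le\elek,\expoy\le 2\po-1$.

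For the remaining term I would expand $\polys=\sum\varx^\numa\vary^\numb\poly^\pofsp$ over the triples of its definition ($\pofsp\mid\po/2$, $\numa+\numb=4\po-4\pofsp$, $\numa\equiv\numb\bmod 12\pofsp$), so $\varx^\elek\vary^\expoy\polys=\sum\varx^{\elek+\numa}\vary^{\expoy+\numb}\poly^\pofsp$. Since $\pofsp\le\po/2\le\po-1$, the reduction applies to each summand and turns it into $\alg^\pofsp\varx^{\elek+\numa+2\pofsp}\vary^{\expoy+\numb+2\pofsp}$, a pure monomial of degree $6\po-1$ that is either $0$ or $F_\ell$ with $\ell=\elek+\numa+2\pofsp-2\po$. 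The relations $\numa\equiv\numb\bmod 12\pofsp$ and $\numa+\numb=4\po-4\pofsp$ force $\numa\equiv 2\po-2\pofsp\bmod 6\pofsp$, whence $\ell\equiv\elek\bmod 6\pofsp$, which is the congruence recorded in the statement.

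The step I expect to be the real obstacle is the converse bookkeeping here: one must verify that for every $\pofsp\mid\po/2$ and every $\ell$ with $\ell\equiv\elek\bmod 6\pofsp$ and $0\le\ell\le 2\po-1$, the exponent $\numa:=\ell-\elek+2\po-2\pofsp$ actually satisfies $0\le\numa\le 4\po-4\pofsp$, so that exactly one triple of $\polys$ contributes $\alg^\pofsp F_\ell$ and there is neither over- nor under-counting. This is precisely where the $2$-adic structure of $\po$ enters: since $\pofsp$ is a power of two dividing $\po/2$, no multiple of $6\pofsp$ lies strictly between $2\po-2\pofsp$ and $2\po$ (such a multiple would force $2\pofsp$ to divide a nonzero integer smaller than $2\pofsp$), so every multiple of $6\pofsp$ in $[0,2\po-1]$ is in fact $\le 2\po-2\pofsp$, and both inequalities for $\numa$ fall out. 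Summing the surviving contributions over all triples and over $\pofsp$ gives $\varx^\elek\vary^\expoy\polys\equiv\sum\alg^\pofsp F_\ell$ in $\spacequotpri$ with exactly the index set of the statement, and adding up the three pieces completes the computation.
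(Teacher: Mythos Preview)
Your argument is correct and follows exactly the same three-term decomposition as the paper: write $\equat E_\elek=\varx^\elek\vary^{2\po-1-\elek}\polyr+\varx^\elek\vary^{2\po-1-\elek}\polys+\alg\,\varx^{\elek+2}\vary^{2\po+1-\elek}\poly^{\po-1}$ using $\poly^\po=\polyr+\polys$, and identify the three pieces. The only difference is that the paper does not carry out the identification of the last two terms with $\alg^\po F_\elek$ and $\sum\alg^{\pofsp}F_\ell$, instead citing the proof of Theorem~3.13 of \cite{monskyhilbertkunzpointquartic}; you supply that computation explicitly via the reduction $\varx^\numa\vary^\numb\poly^\numn\equiv\alg\,\varx^{\numa+2}\vary^{\numb+2}\poly^{\numn-1}\pmod{\equat\subwsp}$ and the bijection between surviving monomials and the index set $\{(\pofsp,\ell):\pofsp\mid\po/2,\ \ell\equiv\elek\bmod 6\pofsp\}$. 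Your $2$-adic argument that no multiple of $6\pofsp$ lies in $(2\po-2\pofsp,2\po)$ is exactly what is needed to pin down that bijection.
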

\begin{proof}
Since $\poly^\po=\polyr+\polys$ in $\ringo$ and $\equa=\poly+ \alg
\varx^2\vary^2$, the image of $E_\elek$ is
$$\varx^\elek \vary^{2\po - \elek-1}( \alg \varx^2\vary^2)\poly^{\po
-1} + \varx^\elek \vary^{2\po - \elek -1} \polys + \varx^\elek
\vary^{2 \po - \elek -1} \polyr \, .$$
But the proof of Theorem 3.13 of \cite{monskyhilbertkunzpointquartic}
shows that the first two of these three terms are $\alg^\po F_\elek$
and $\sum \alg^{\pofsp} F_\ell$.
\end{proof}

Lemmas \ref{lemmabasis}, \ref{lemmabasispri} and \ref{imagemult}
allow one to write down the matrix of the map
$ \spacequot \to \spacequotpri$
explicitly. One also wants to know the matrix of the map
$\spacequot \to \ringo_{12 \po -3}$, $\eleu \mapsto \eleu \elerep$.
This can be read off from:

\begin{lemma}
\label{lemmacongruence}
If $\expox + \expoy =2 \po -1$ then $ (\varxex \varyex \poly^{\po -1}) \cdot \elerep $ is
$\coef (\varx\vary\varz)^{\pof -1}$ where $ \coef =1$ if $\expox
\geq \po +1$ and $\expox \equiv \po +1$  mod $3$, and $\coef =0$ otherwise.
\end{lemma}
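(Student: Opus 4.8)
The plan is to convert the identity into extracting a single coefficient from $\poly^{\po-1}$, and then to read that coefficient off from the fact that $\po-1=2^{\degalg-1}-1$ is a string of $1$'s in base $2$. First I would observe that $\ringo=\fieldclo[\varx,\vary,\varz]/\maxfropof$ is Artinian with one-dimensional top-degree component $\ringo_{12\po-3}$ (that is, $\ringo_{3(\pof-1)}$), spanned by the class of $(\varx\vary\varz)^{\pof-1}$ — the standard fact for a tensor product of truncated polynomial rings. Since $\varx^\expox\vary^\expoy\poly^{\po-1}$ has degree $6\po-5$ and $\elerep=\varx\vary^{3\po+1}\varz^{3\po}$ has degree $6\po+2$, their product lies in $\ringo_{12\po-3}$, hence equals $\coef\,(\varx\vary\varz)^{\pof-1}$ for a unique $\coef\in\fieldclo$. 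By the reduction already carried out in the first sentence of the proof of Lemma~\ref{monomexpand} (which uses only that $\varx^\expox\vary^\expoy\poly^{\po-1}$ has degree $6\po-5$), and because $(\varx\vary\varz)^{\pof-1}$ is a standard monomial of $\ringo$ — all exponents strictly below $\pof$, so reduction modulo $\maxfropof$ does not alter its coefficient — this $\coef$ is the coefficient of $\varx^{\pof-2}\vary^{\po-2}\varz^{\po-1}$ in $\varx^\expox\vary^\expoy\poly^{\po-1}$. Using $\expoy=2\po-1-\expox$ and $\pof=4\po$, it is therefore the coefficient of $\varx^{4\po-2-\expox}\vary^{\expox-\po-1}\varz^{\po-1}$ in $\poly^{\po-1}$. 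In particular $\coef=0$ whenever $\expox<\po+1$, so I may assume $\expox\ge\po+1$ from here on.

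Next I would factor $\poly=\varz(\varz^3+\varx\vary\varz+\varx^3+\vary^3)$, so that $\poly^{\po-1}=\varz^{\po-1}(\varz^3+\varx\vary\varz+\varx^3+\vary^3)^{\po-1}$ and the sought coefficient equals the coefficient of $\varx^{4\po-2-\expox}\vary^{\expox-\po-1}$ with $\varz$-exponent $0$ in $(\varz^3+\varx\vary\varz+\varx^3+\vary^3)^{\po-1}$. Since $\fieldclo$ has characteristic $2$ and $\po-1=\sum_{\ell=0}^{\degalg-2}2^\ell$, the Frobenius gives
\[
(\varz^3+\varx\vary\varz+\varx^3+\vary^3)^{\po-1}=\prod_{\ell=0}^{\degalg-2}\bigl(\varz^{3\cdot 2^\ell}+\varx^{2^\ell}\vary^{2^\ell}\varz^{2^\ell}+\varx^{3\cdot 2^\ell}+\vary^{3\cdot 2^\ell}\bigr).
\]
A term with $\varz$-exponent $0$ must take $\varx^{3\cdot 2^\ell}$ or $\vary^{3\cdot 2^\ell}$ from every factor; choosing $\varx$ on a subset $S\subseteq\{0,\dots,\degalg-2\}$ and $\vary$ on its complement produces $\varx^{3a}\vary^{3b}$ with $a=\sum_{\ell\in S}2^\ell$ and $b=(\po-1)-a$, and by uniqueness of binary expansions every pair $(a,b)$ with $a,b\ge 0$ and $a+b=\po-1$ arises from exactly one $S$. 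Hence the monomials of $\varz$-exponent $0$ occurring in the product are precisely the $\varx^{3a}\vary^{3(\po-1-a)}$ with $0\le a\le\po-1$, each with coefficient $1$.

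It then remains to decide when $\varx^{4\po-2-\expox}\vary^{\expox-\po-1}$ has this shape. The two exponents sum to $3(\po-1)$, so it suffices that $3\mid\expox-\po-1$, i.e.\ $\expox\equiv\po+1\pmod 3$; in that case $a=(4\po-2-\expox)/3$ and $b=(\expox-\po-1)/3$ are non-negative integers with $a+b=\po-1$ (here one uses $\expox\ge\po+1$ and $\expox\le 2\po-1\le 4\po-2$), so $\coef=1$, whereas if the divisibility or the bound $\expox\ge\po+1$ fails then $\coef=0$. This is exactly the asserted dichotomy. I expect the only real care to be needed in the first paragraph — identifying the socle of $\ringo$ and checking that reduction modulo $\maxfropof$ leaves the coefficient of $(\varx\vary\varz)^{\pof-1}$ unchanged; the substance of the argument is the elementary binary-digit count, in the spirit of Lemmas~\ref{twopowers} and~\ref{zexpand}.
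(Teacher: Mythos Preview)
Your proof is correct and follows essentially the same route as the paper's. The only cosmetic difference is that you factor $P=z(z^3+xyz+x^3+y^3)$ explicitly and then extract the $z^0$ part via the Frobenius/binary-digit expansion, whereas the paper writes $P=(x^3+y^3)z+(\text{higher order in }z)$ and passes directly to the coefficient in $(x^3+y^3)^{Q-1}$; both reductions yield $(x^3+y^3)^{Q-1}=\sum_{a=0}^{Q-1}x^{3a}y^{3(Q-1-a)}$ by the same Lucas-type fact, and the final bookkeeping is identical.
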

\begin{proof}
As in the proof of Lemma \ref{monomexpand} we see that $\coef$ is
the coefficient of
$\varx^{\pof-2} \vary^{\po -2} \varz^{\po -1}$ in
$\varxex\varyex\poly^{\po-1}$. Since $\poly =(\varx^3+\vary^3)\varz
+ (\mbox{higher order terms in } \varz)$,
$\coef$ is the coefficient of $\varx^{\pof-2}\vary^{\po-2}$ in $\varxex\varyex (\varx^3 + \vary^3)^{\po -1}$.
Now $\varxex\varyex (\varx^3 + \vary^3)^{\po -1}$ is the sum of
those monomials in $\varx$ and $\vary$ of total degree $5\po -4$,
such that the $\varx$-exponent is congruent to $\expox$ mod
$3$ and lies between $\expox $ and $\expox+3\po -3$. So if $\expox
\not\equiv \po +1$ mod $3$, $\coef =0$, while if $\expox \equiv \po
+1$ mod $3$,
$\coef =1$ if $\expox \leq 4 \po -2 \leq \expox +3 \po -3$ and is $0$ otherwise. This gives the lemma.
\end{proof}

We now define subspaces $\spax \subseteq \spacequot$ and $\spay \subseteq
\spacequotpri$.

\begin{definition}
$\spax    $ is spanned by $G_i =E_{2i} +E_{2\po -1-2 i} $, $0 \leq i \leq \po -1$.
$\spay    $ is spanned by $H_i =F_{2i} +F_{2\po -1-2 i} $, $0 \leq i \leq \po -1$, together with
$\gamma=( \varx^{2\po -2} \vary +\varx\vary^{2\po-2})\polyr$.
\end{definition}

Lemmas \ref{lemmabasis} and \ref{lemmabasispri} tell us that $\spax$
and $\spay$ have dimensions $\po$ and $\po+1$ respectively.

\begin{lemma}
\label{lemmalong}
\begin{enumerate}
    \item $\spacequotmap$  maps $\spax$ into $\spay$ (the map is induced by multiplication by $g$).
     \item The kernel of $\spacequotmap$ is contained in $\spax$\!.
     Consequently the map $\spaxtoy$ of {\rm(}1{\rm)} has one-dimensional
     kernel.
     \item If the map $\spaxtoy \oplus \ringo_{12\po -3}$,
     $\eleu \mapsto (\eleu \equa ,\eleu \elerep )$ is one to one, then $ \elerep \notin \equa \ringo $.
   \end{enumerate}
\end{lemma}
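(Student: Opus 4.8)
The plan is to exploit the symmetry of the whole configuration under the $\fieldclo$-algebra involution $\sigma$ of $\ringo=\ringpoly/\maxfropof$ that interchanges $\varx$ and $\vary$. Since $\poly$, $\equa$ and the ideal $\maxfropof$ are symmetric in $\varx,\vary$, this $\sigma$ is well defined and fixes both $\equa$ and $\polyr$; it preserves each of $\subw$, $\subwsp$, $\subwpri$ and $\equa\subwsp$, hence induces involutions on $\spacequot$ and $\spacequotpri$ making $\spacequotmap$ equivariant. From the definitions one gets $\sigma(E_\elek)=E_{2\po-1-\elek}$ and $\sigma(F_\elek)=F_{2\po-1-\elek}$, so $G_i$, $H_i$ and $\gamma$ are $\sigma$-invariant; moreover $\elek\mapsto 2\po-1-\elek$ is a fixed-point-free involution of $\{0,\dots,2\po-1\}$ pairing each even index with an odd one, so by Lemma \ref{lemmabasis} the $G_i$ span the entire $\sigma$-fixed subspace of $\spacequot$.

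For (1), I would compute $\equa G_i=\equa E_{2i}+\equa E_{2\po-1-2i}$ via Lemma \ref{imagemult}. The $\alg^\po$-terms add to $\alg^\po H_i$. For the $\sum\alg^{\pofsp}F_\ell$-terms, the point is that $\ell\equiv 2i\pmod{6\pofsp}$ if and only if $2\po-1-\ell\equiv 2\po-1-2i\pmod{6\pofsp}$, so $\ell\mapsto 2\po-1-\ell$ matches the summation index sets of $E_{2i}$ and $E_{2\po-1-2i}$; adding, these become a sum of expressions $\alg^{\pofsp}(F_\ell+F_{2\po-1-\ell})=\alg^{\pofsp}H_j$, which lie in $\spay$. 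For the $\polyr$-terms, using $\varx^3\polyr=\vary^3\polyr$ one reduces $\varx^a\vary^{2\po-1-a}\polyr$ to whichever of $\varx\vary^{2\po-2}\polyr$, $\varx^{\po-2}\vary^{\po+1}\polyr$, $\varx^{2\po-2}\vary\polyr$ matches $a$ modulo $3$ (the three $\varx$-exponents being pairwise incongruent mod $3$ because $\po$ is a power of two). If $2i\not\equiv\po-2\pmod 3$ the reductions coming from $2i$ and $2\po-1-2i$ add up to $\varx\vary^{2\po-2}\polyr+\varx^{2\po-2}\vary\polyr=\gamma$; if $2i\equiv\po-2\pmod 3$ both equal $\varx^{\po-2}\vary^{\po+1}\polyr$ and cancel in characteristic two. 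In every case $\equa G_i\in\spay$.

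For (2), Lemma \ref{kernel} gives that the kernel of $\spacequotmap$ is one-dimensional; it is $\sigma$-stable since $\spacequotmap$ is $\sigma$-equivariant, and a one-dimensional subspace stable under an involution of a characteristic-two vector space is fixed pointwise (an eigenvalue $c$ satisfies $c^2=1$, hence $c=1$), so it lies in the $\sigma$-fixed subspace, namely $\spax$. Thus the kernel of $\spaxtoy$ coincides with that of $\spacequotmap$ and is one-dimensional. For (3), suppose $\elerep\in\equa\ringo$ and, taking the homogeneous component of degree $6\po-2$, write $\elerep=\equa w$ with $w\in\ringo_{6\po-2}$; pick $\elet\neq 0$ in the kernel of $\spaxtoy$. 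Lifting $\elet$ to $\subw$, the kernel condition together with $\equa\subw\subseteq\subwpri$ (Lemma \ref{inclusion}) gives $\equa\elet=\equa s$ for some $s\in\subwsp$, whence $\elet\elerep=(\equa\elet)w=s(\equa w)=s\elerep=0$ by Lemma \ref{monomexpand}; and $\equa\elet\in\equa\subwsp$ shows the $\spay$-component of $\elet$ vanishes too. So $\elet\mapsto(0,0)$ under $\eleu\mapsto(\eleu\equa,\eleu\elerep)$, contradicting injectivity, and therefore $\elerep\notin\equa\ringo$.

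The main obstacle will be part (1), and inside it the bookkeeping for the $\polyr$-terms: one must verify that the reductions via $\varx^3\polyr=\vary^3\polyr$ stay within the admissible exponent ranges and, above all, that the vector $\varx^{\po-2}\vary^{\po+1}\polyr$ — which lies in the $\sigma$-fixed part of $\spacequotpri$ but not in $\spay$ — always drops out; this cancellation is exactly where characteristic two enters. Parts (2) and (3) are then short, given Lemma \ref{kernel} and the symmetry setup.
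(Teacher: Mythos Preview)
Your argument is correct and follows essentially the same route as the paper: part~(1) via Lemma~\ref{imagemult} together with the case split on the residue of $2i$ modulo $3$, part~(2) via the $(x,y,z)\mapsto(y,x,z)$ involution and the one-dimensionality of the kernel from Lemma~\ref{kernel}, and part~(3) by the same ``pick a nonzero element in the kernel and get a contradiction'' trick. Your write-up is in fact more explicit than the paper's in two places: you spell out why the sums $\sum\alpha^{q_0}F_\ell$ from $E_{2i}$ and $E_{2Q-1-2i}$ pair off into $H_j$'s, and in~(3) you supply the computation $\elet\elerep=(\equa\elet)w=(\equa s)w=s\elerep=0$ that the paper leaves as ``the usual way.''
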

\begin{proof}
By Lemma \ref{imagemult}, the image of $ E_{2k}$ in $W'$ is $\sum \alpha^{q_0} F_{2\ell}
+x^{2k} y^{2Q-1-2k} R$ where the sum extends over all pairs $(q_0,\ell)$ with $q_0$ dividing $Q$ and $2\ell \equiv 2k \mod 6q_0$. Furthermore the image of $E_{2Q-1-2k}$ is $\sum \alpha^{q_0} F_{2Q-1-2\ell} +x^{2Q-1-2k} y^{2k} R$ where the sum ranges over the same index set. We conclude that the image of $G_k$ in
$\subwpri$ is
$$\sum \alg^{\pofsp}H_\ell + (\varx^{2\elek}
 \vary^{2\po-1-2 \elek} +\varx^{2\po -1-2\elek} \vary^{2 \elek})\polyr \, ,$$
where the sum extends over all pairs $(\pofsp, \ell)$ with $\pofsp $ dividing $\po$ and $\ell \equiv
\elek$ mod $3 \pofsp$.
If $2 \elek \equiv 2 \po-1-2\elek$ mod $3$ then
$\varx^{2\elek} \vary^{2 \po -1-2\elek}\polyr = \varx^{2\po -1-2\elek} \vary^{2\elek}\polyr$
and the second term in the image of $G_\elek$ is $0$.
If $2 \elek \not\equiv 2 \po-1-2\elek$ mod $3$, then one of
$ 2 \elek -(2\po-1-2\elek)$ and $(2\po-1-2\elek)-2 \elek$
is congruent to $1$ mod $3$, and the other to $2$ mod $3$, so the
second term in the image of $G_\elek$ is $\gamma$. We conclude that
the image of $G_\elek$ is $\sum \alg^\pofsp H_\ell$ if
$\elek \equiv 2\po -1$ mod $3$, and $\sum \alg^\pofsp H_\ell + \gamma$
otherwise.

The automorphism $(\varxyz) \mapsto (\varyxz)$ of
$\fieldclo[\varxyz]$ is easily seen to stabilize the kernel of
$\spacequotmap$. Since the kernel has dimension $1$ and $1=-1$ in
$\fieldclo$, the
automorphism acts trivially on the kernel. As it interchanges
$E_\indi$ and $E_{2\po -1-\indi}$, the set of elements of
$\spacequot$ fixed by this automorphism is $\spax$, giving (2).
Finally (3) follows from (2) in the usual way.
\end{proof}

\begin{lemma}
\label{lemmanext}
Suppose $0 \leq \elek \leq \po-1$. Then $G_\elek \cdot \elerep
=(\varx\vary\varz)^{\pof-1}$ if $\elek \equiv 2 \po -1$ mod $3$, and
is $0$ otherwise.
\end{lemma}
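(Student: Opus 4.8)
The plan is to compute $G_\elek \cdot \elerep$ term by term via Lemma~\ref{lemmacongruence}. Since $G_\elek = E_{2\elek} + E_{2\po - 1 - 2\elek}$, and since $0 \le \elek \le \po - 1$ makes both $2\elek$ and $2\po - 1 - 2\elek$ nonnegative with $\expox + \expoy = 2\po - 1$, that lemma applies to each summand. It gives $E_{2\elek}\cdot\elerep = \coef_1\,(\varx\vary\varz)^{\pof-1}$ with $\coef_1 = 1$ exactly when $2\elek \ge \po + 1$ and $2\elek \equiv \po + 1$ mod $3$, and $E_{2\po - 1 - 2\elek}\cdot\elerep = \coef_2\,(\varx\vary\varz)^{\pof-1}$ with $\coef_2 = 1$ exactly when $2\po - 1 - 2\elek \ge \po + 1$ and $2\po - 1 - 2\elek \equiv \po + 1$ mod $3$; otherwise the coefficient is $0$. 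Hence $G_\elek\cdot\elerep = (\coef_1 + \coef_2)\,(\varx\vary\varz)^{\pof-1}$ in $\ringo$, and the statement reduces to evaluating $\coef_1 + \coef_2$ in $\fieldclo$, a field of characteristic $2$.

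Next I would note that the two congruence conditions coincide and agree with the hypothesis of the statement. Indeed $2\po - 1 - 2\elek \equiv \po + 1$ mod $3$ is equivalent to $2\elek \equiv \po - 2 \equiv \po + 1$ mod $3$; and since $2$ is invertible mod $3$, multiplying $2\elek \equiv \po + 1$ by $2$ shows this is equivalent to $\elek \equiv 2(\po + 1) \equiv 2\po - 1$ mod $3$. So if $\elek \not\equiv 2\po - 1$ mod $3$, then $\coef_1 = \coef_2 = 0$ and $G_\elek\cdot\elerep = 0$, which is the ``otherwise'' part of the statement.

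It remains to treat $\elek \equiv 2\po - 1$ mod $3$, equivalently $2\elek \equiv \po + 1$ mod $3$; there I must show that exactly one of $\coef_1,\coef_2$ equals $1$, so that $\coef_1 + \coef_2 = 1$ in $\fieldclo$ and $G_\elek\cdot\elerep = (\varx\vary\varz)^{\pof-1}$. The relevant inequalities are $2\elek \ge \po + 1$ (which makes $\coef_1 = 1$) and $2\po - 1 - 2\elek \ge \po + 1$, i.e.\ $2\elek \le \po - 2$ (which makes $\coef_2 = 1$); these plainly cannot both hold, and they both fail only for $2\elek \in \{\po - 1, \po\}$. But under the standing congruence $2\elek \equiv \po + 1$ mod $3$, the value $2\elek = \po$ would force $\po \equiv \po + 1$ mod $3$ and the value $2\elek = \po - 1$ would force $\po - 1 \equiv \po + 1$ mod $3$, both absurd. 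Therefore exactly one inequality holds, exactly one of $\coef_1,\coef_2$ is $1$, and the claim follows. The one place that needs a moment's care is precisely this dichotomy — ruling out the two ``gap'' values of $2\elek$ using the congruence; the rest is bookkeeping built on Lemma~\ref{lemmacongruence}.
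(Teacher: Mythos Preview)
Your proof is correct and follows essentially the same route as the paper: apply Lemma~\ref{lemmacongruence} to each summand of $G_\elek$, observe that the two congruence conditions are equivalent to $\elek\equiv 2\po-1$ mod $3$, and then check that under this congruence exactly one of $2\elek$, $2\po-1-2\elek$ exceeds $\po$. The paper states this last dichotomy without justification; your explicit elimination of the gap values $2\elek\in\{\po-1,\po\}$ via the congruence is a welcome detail (one could also rule out $2\elek=\po-1$ by parity, since $\po\ge 2$ is even).
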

\begin{proof}
If $\elek \equiv 2\po -1$ mod $3$, both $2 \elek$ and $2
\po-1-2\elek$ are $\equiv \po +1$ mod $3$. Furthermore just one of
$2\elek$ and $2\po -1-2\elek$ is $> \po$, and we apply Lemma
\ref{lemmacongruence}.
If $\elek\not\equiv 2\po -1$ mod $3$, neither $2\elek$ nor $2 \po
-1-2\elek$ is congruent to $\po+1$ mod $3$, and we again use Lemma
\ref{lemmacongruence}.
\end{proof}

Combining the formulas derived in Lemmas \ref{lemmalong} and \ref{lemmanext} we
get:

\begin{theorem}
\label{imageg}
The image of $G_\elek$ under the map $\spax \to \spay \oplus \ringo_{12 \po-3}$,
$\eleu \mapsto (\eleu \equa ,
\eleu \elerep)$ is
$$ \sum \alg^\pofsp H_\ell + b_\elek (\gamma +(\varx\vary\varz)^{\pof-1}) + \gamma \, ,$$
where $b_\elek =1$ if $\elek \equiv 2\po-1$ mod $3$ and is $0$ otherwise.
The sum extends over all pairs $(\pofsp,\ell)$
with $\pofsp$ dividing $\po$ and $\ell \equiv \elek $ mod $3\pofsp$.
\end{theorem}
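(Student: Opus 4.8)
The plan is to deduce Theorem \ref{imageg} by simply pairing the two computations already made. The map in question has two components: $\eleu \mapsto \eleu\equa$, which carries $\spax$ into $\spay$ by Lemma \ref{lemmalong}(1), and $\eleu \mapsto \eleu\elerep$, which lands in $\ringo_{12\po-3}$ since $\eleu$ has degree $6\po-5$ and $\elerep$ has degree $6\po+2$. So it suffices to evaluate each component on the basis vectors $G_\elek$, $0 \leq \elek \leq \po-1$, of $\spax$, and then record the image as an ordered pair, written additively in $\spay \oplus \ringo_{12\po-3}$.

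For the first component I would invoke the formula established inside the proof of Lemma \ref{lemmalong}: the image of $G_\elek$ under multiplication by $\equa$ is $\sum \alg^\pofsp H_\ell$ when $\elek \equiv 2\po-1$ mod $3$, and $\sum \alg^\pofsp H_\ell + \gamma$ otherwise, the sum running in both cases over the pairs $(\pofsp,\ell)$ with $\pofsp$ dividing $\po$ and $\ell \equiv \elek$ mod $3\pofsp$ (this is the $G$-version of the index set, the one obtained after combining the two $E$-contributions, not the coarser set of Lemma \ref{imagemult}). For the second component I would quote Lemma \ref{lemmanext} verbatim: $G_\elek \cdot \elerep = (\varx\vary\varz)^{\pof-1}$ when $\elek \equiv 2\po-1$ mod $3$, and $0$ otherwise.

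It then only remains to fuse the two cases into a single formula. Let $b_\elek$ be the indicator that equals $1$ precisely when $\elek \equiv 2\po-1$ mod $3$. Then in both factors together the image of $G_\elek$ is
$$ \sum \alg^\pofsp H_\ell + b_\elek\bigl(\gamma + (\varx\vary\varz)^{\pof-1}\bigr) + \gamma . $$
If $b_\elek = 0$ the bracketed term disappears and one is left with $\sum \alg^\pofsp H_\ell + \gamma$, which is exactly what the non-congruent case of Lemma \ref{lemmalong} and the value $0$ from Lemma \ref{lemmanext} give; if $b_\elek = 1$ the two copies of $\gamma$ add to zero, since the ambient spaces are vector spaces over $\fieldclo$, which has characteristic two, leaving $\sum \alg^\pofsp H_\ell + (\varx\vary\varz)^{\pof-1}$, as required by the congruent case. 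This matches the asserted expression, so the statement follows. There is no genuine obstacle in this final step; the only points demanding care are the characteristic-two cancellation of the two $\gamma$ terms and the use of the $G$-version rather than the $E$-version of the summation index set.
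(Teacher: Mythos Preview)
Your proposal is correct and follows exactly the paper's approach: the paper's proof is the single sentence ``Combining the formulas derived in Lemmas \ref{lemmalong} and \ref{lemmanext} we get'' the theorem, and you have simply written out that combination in detail, including the characteristic-two cancellation of the two $\gamma$'s.
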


\ren{\matrix}{{M[ \po, \alg]}}

\begin{definition}
\begin{enumerate}
\item
$\matrix$ is the matrix $\matrixdet$, $0 \leq \indij < \po$,
where $\matrixent= \sum \alg^\pofsp$, the sum extending over all
$\pofsp$ such that $\pofsp$ divides $\po$ and $3 \pofsp$ divides $\indi-\indj$.

\item
$C(\po)$ and $B(\po)$ are the row vectors
  $(\coef_0 \comdots \coef_{\po-1})$ and
  $(\coefb_0 \comdots \coefb_{\po-1})$,
  where each $\coef_\indi$ is $1$, and
  $\coefb_\indi$ is $1$ if $\indi \equiv 2 \po -1$ mod $3$, and is
  $0$ otherwise.
\end{enumerate}
\end{definition}

The matrix $\matrix$ appeared in section 2 of
\cite{monskyhilbertkunzpointquartic}; it will also play a key role
here. We shall use:

\begin{theorem}
\label{entries}
The $\matrixent$ above satisfy:

\begin{enumerate}
  \item $\matrixent =0$ if $\indi= \indj$ or $\indi \not\equiv \indj
  $ mod $3$.

  \item
  Suppose $\indi \equiv \indj
  $ mod $3$ and $\indi \neq \indj$. Then $\matrixent \neq 0$.
  Furthermore $\matrixent$ only depends on the integer $\ord_2( \indi
  -\indj)$.
\end{enumerate}
\end{theorem}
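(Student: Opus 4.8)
My plan is to evaluate the entry $\matrixent=\sum\alg^{\pofsp}$ (the sum over powers of two $\pofsp$ with $\pofsp\mid\po$ and $3\pofsp\mid\indi-\indj$) directly, as an explicit function of $\ord_2(\indi-\indj)$, and then read off both assertions. The two facts I will feed in are the description of $\degalg$ recorded in Theorem \ref{theoremonetwo} — namely that $\degalg$ is the least $\elek$ with $\sum_{\ell\le \elek-1}\alg^{2^\ell}=0$ — and the telescoping identity $\sum_{\ell\le \elek-1}\alg^{2^\ell}=\elesec^{2^\elek}+\elesec$, which holds in characteristic two because $\alg=\elesec^2+\elesec$.

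First I would dispose of part (1). If $\indi\not\equiv\indj$ mod $3$, then $3\nmid\indi-\indj$, so no $\pofsp$ satisfies $3\pofsp\mid\indi-\indj$ and the sum defining $\matrixent$ is empty, whence $\matrixent=0$. If $\indi=\indj$, then $3\pofsp\mid 0$ for every $\pofsp$, so every divisor $\pofsp=2^\ell$ ($0\le\ell\le\degalg-1$) of $\po=2^{\degalg-1}$ contributes and $\matrixent=\sum_{\ell=0}^{\degalg-1}\alg^{2^\ell}$, which is $0$ by the minimality description of $\degalg$. For part (2), suppose $\indi\equiv\indj$ mod $3$ and $\indi\ne\indj$; since $\pofsp$ is a power of two, $3\pofsp\mid\indi-\indj$ is then equivalent to $\pofsp\mid\indi-\indj$, i.e.\ to $\pofsp\le 2^{\ord_2(\indi-\indj)}$. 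Here the size constraint enters: since $0\le\indi,\indj<\po=2^{\degalg-1}$ and $\indi\ne\indj$ we have $0<|\indi-\indj|<2^{\degalg-1}$, hence $\ord_2(\indi-\indj)\le\degalg-2$, so every $\pofsp$ meeting the condition $\pofsp\mid\indi-\indj$ automatically divides $\po$ and the constraint $\pofsp\mid\po$ is vacuous. Therefore $\matrixent=\sum_{\ell=0}^{a}\alg^{2^\ell}$ with $a=\ord_2(\indi-\indj)$, which visibly depends only on $a$; and by the telescoping identity this equals $\elesec^{2^{a+1}}+\elesec$. Since $0<a+1\le\degalg-1<\degalg=\deg(\elesec)$, the element $\elesec$ does not lie in $\FF_{2^{a+1}}$, so $\elesec^{2^{a+1}}\ne\elesec$ and $\matrixent\ne 0$.

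The argument is essentially bookkeeping, so I do not expect a serious obstacle; the one point that needs care — and the only place the hypothesis that $\indi,\indj$ range over $[0,\po)$ is used — is the observation that the divisibility condition $\pofsp\mid\po$ never restricts the sum once $\indi\ne\indj$, so that $\matrixent$ is a genuine initial segment $\sum_{\ell\le a}\alg^{2^\ell}$ and can be collapsed via $\sum_{\ell\le \elek-1}\alg^{2^\ell}=\elesec^{2^\elek}+\elesec$ to $\elesec^{2^{a+1}}+\elesec$. Everything else is a direct unwinding of the definition of $\matrixent$ together with the defining property of $\degalg$ quoted from \cite{monskyhilbertkunzpointquartic} through Theorem \ref{theoremonetwo}.
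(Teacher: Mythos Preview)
Your argument is correct and is essentially the same as the paper's: both unwind the defining sum $m_{i,j}=\sum\alpha^{q_0}$ into the partial sum $\sum_{\ell=0}^{k}\alpha^{2^\ell}$ with $k=\ord_2(i-j)$ (using $0<|i-j|<Q$ to see $k<m-1$) and then invoke the characterization of $m=m(\alpha)$ from Theorem~\ref{theoremonetwo} to handle both the vanishing at $i=j$ and the non-vanishing for $i\neq j$. Your only addition is spelling out the non-vanishing via $\beta^{2^{k+1}}+\beta\neq 0$, which is exactly what the paper's appeal to ``the description of $m(\alpha)$'' encodes.
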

\begin{proof}
When $\indi \not\equiv \indj
$ mod $3$ the sum is empty and $\matrixent =0$. When
$\indi \equiv \indj
$ mod $3$ and $\indi \neq \indj$ let $\ordk =\ord_2( \indi-\indj)$. It's easy to see that $\ordk < \degalg -1$. Now the sum
defining $\matrixent$ is $\sum_{\ell=0}^\ordk \alg^{2^\ell}$, and
the description of $m(\alg)$ given in Theorem \ref{theoremonetwo}
shows that this is $\neq 0$; it only depends on $\ordk$. Finally
when $\indi=\indj$,
$\matrixent = \sum_{\ell =0}^{m-1}
\alg^{2^\ell}$ and again the description of $m(\alg)$ shows that this is $0$.
\end{proof}

\begin{theorem}
\label{matrixdescript}
Let $M=\matrix$. Then with respect to appropriate bases, the matrix
of the map
$\spax \to \spay \oplus \ringo_{12\po -3}$ is
$\left(
   \begin{array}{c}
     M \\
     B(\po) \\
     C(\po) \\
   \end{array}
 \right)
 $.
\end{theorem}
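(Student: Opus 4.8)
The plan is to read the matrix directly off Theorem \ref{imageg}, once suitable ordered bases of the source and target have been fixed.

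For the source I would take the ordered basis $G_0,\ldots,G_{\po-1}$ of $\spax$; these span $\spax$ by definition and number $\po=\dim\spax$, hence are a basis. For the target $\spay\oplus\ringo_{12\po-3}$ I would take the ordered basis
\[
H_0,\ \ldots,\ H_{\po-1},\qquad \gamma+(\varx\vary\varz)^{\pof-1},\qquad \gamma .
\]
This is a basis: the algebra $\ringo=\ringpoly/\maxfropof$ is an Artinian complete intersection, hence Gorenstein with one-dimensional socle in degree $3(\pof-1)=12\po-3$, so $\ringo_{12\po-3}$ is spanned by $(\varx\vary\varz)^{\pof-1}$; since $H_0,\ldots,H_{\po-1},\gamma$ is a basis of $\spay$ (Lemma \ref{lemmabasispri}) and $(\varx\vary\varz)^{\pof-1}$ is recovered as the difference of the last two displayed vectors, the displayed vectors span $\spay\oplus\ringo_{12\po-3}$, and there are $\po+2=\dim(\spay\oplus\ringo_{12\po-3})$ of them.

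Next I would invoke Theorem \ref{imageg}: under $\eleu\mapsto(\eleu\equa,\eleu\elerep)$ the image of $G_\elek$ equals
\[
\sum \alg^\pofsp H_\ell \;+\; b_\elek\big(\gamma+(\varx\vary\varz)^{\pof-1}\big)\;+\;\gamma ,
\]
the sum extending over all $(\pofsp,\ell)$ with $\pofsp\mid\po$ and $\ell\equiv\elek$ mod $3\pofsp$. In the ordered basis above, the coordinate column of this image has $H_\indi$-entry equal to $\sum\alg^\pofsp$ over those $\pofsp\mid\po$ with $3\pofsp\mid\indi-\elek$, penultimate entry $b_\elek$, and last entry $1$. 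Setting $\elek=\indj$ and comparing with the definitions of $\matrix$, $B(\po)$ and $C(\po)$, one reads off that the $\indj$-th column of the matrix of the map in these bases is the $\indj$-th column of the matrix in the statement; assembling the columns proves the theorem.

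The one step that needs a word of justification — and the place I would spend effort — is the identification of the $H_\indi$-entry with $\matrixent$. Here I would note that the $H_\ell$ with $0\le\ell\le\po-1$ are linearly independent (for distinct such $\ell$ they are supported on disjoint pairs of the basis monomials of Lemma \ref{lemmabasispri}) and that $H_\ell$ vanishes for $\ell$ outside this range, since then its defining monomials lie in $\maxfropof$. Hence for each divisor $\pofsp$ of $\po$ the corresponding summand of $\sum\alg^\pofsp H_\ell$ contributes $\alg^\pofsp$ to the coefficient of $H_\indi$ exactly when $\indi\equiv\elek$ mod $3\pofsp$, \ie when $3\pofsp\mid\indi-\indj$, and summing over $\pofsp$ gives $\matrixent$. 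Everything else is a transcription of Theorem \ref{imageg} and the definitions of $\matrix$, $B(\po)$ and $C(\po)$.
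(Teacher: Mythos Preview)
Your proof is correct and follows exactly the approach of the paper: the paper's proof simply declares the bases $G_0,\ldots,G_{\po-1}$ of $\spax$ and $H_0,\ldots,H_{\po-1},\gamma+(\varx\vary\varz)^{\pof-1},\gamma$ of $\spay\oplus\ringo_{12\po-3}$ and says ``Now use Theorem~\ref{imageg}.'' You have spelled out the verification that these are indeed bases and made the identification of the matrix entries with those of $\matrix$, $B(\po)$, $C(\po)$ explicit, which is fine but not different in substance.
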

\begin{proof}
The $G_0 \comdots G_{\po-1}$ form a basis of $\spax$ while $H_0 \comdots
H_{\po -1}, \gamma +(\varx\vary\varz)^{\pof-1}$ and $\gamma$ form a
basis of $\spay \oplus \ringo_{12\po -3}$. Now use Theorem
\ref{imageg}.
\end{proof}

\ren{\field}{{F}}

For
the rest of this section $\field$ is a field of characteristic two,
and $\po \geq 2$ is a power of $2$.

\begin{definition}
\label{specialmatrix}
A matrix $M=\matrixdet$, $0 \leq \indij <\po$, with entries in
$\field$, is a \emph{special $\po$-matrix}
if it satisfies (1) and (2) of Theorem \ref{entries}.
\end{definition}

\begin{lemma}
\label{twentythree}
Let $M$ be a special $\po$-matrix with $\po \geq 4$. Write $M$ as
$
\left(
  \begin{array}{ccc}
    M_1 & M_2 & M_3 \\
    M_4 & N & M_5 \\
    M_6 & M_7 & M_8 \\
  \end{array}
\right)$
where $M_1$ and $M_8$ are $\po/4 \times \po/4$ matrices. Then:

\begin{enumerate}
  \item
  $N$ is a special $\po/2$-matrix.
  \item $M_1 =M_8$, $M_2 =M_7$, $M_3 =M_6$ and $M_4 =M_5$.
  \item
  $M_1 + M_3$ is a non-zero scalar matrix.
\end{enumerate}
{\rm(}We are abusing language somewhat. By {\rm(1)} we mean that the matrix
$N=(n_{\indi,\indj})$, $0 \leq \indij < \po/2$, with
$n_{\indi,\indj}= m_{\indi+ \po/4, \indj + \po/4}$ is a special
$\po/2$-matrix. Similarly {\rm(2)} should be interpreted as stating the
equality of certain entries of $M$.{\rm)}
\end{lemma}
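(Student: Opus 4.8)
The plan is to reduce everything to the $2$-adic arithmetic of index differences. By conditions (1) and (2) of Theorem~\ref{entries}, the entry $m_{\indi,\indj}$ of a special $\po$-matrix depends only on $d=\indi-\indj$: it equals $0$ when $d=0$ or $3\nmid d$, and otherwise it equals a fixed non-zero element $c(\ord_2(d))$ of the field, depending only on $\ord_2(d)$. Write $\psi(d)$ for this common value, so $m_{\indi,\indj}=\psi(\indi-\indj)$. The single fact I shall use repeatedly is the \emph{shift principle}: if $d,s$ are integers with $3\mid s$ and $\ord_2(d)<\ord_2(s)$, then $\psi(d)=\psi(d+s)$; indeed $3\mid d\Leftrightarrow 3\mid(d+s)$, neither $d$ nor $d+s$ equals $0$, and $\ord_2(d+s)=\ord_2(d)$ because the two valuations differ, so the two $\psi$-values are both $0$ or are both non-zero with the same $\ord_2$. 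I also record that a non-zero multiple of $3$ of absolute value $<\po$ has $\ord_2\le\ord_2(\po)-2$ (divide by $3$ and compare with $\po/2$), and one of absolute value $<3\po/4$ has $\ord_2\le\ord_2(\po)-3$.

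With this in hand, part~(1) is immediate: $n_{\indi,\indj}=m_{\indi+\po/4,\,\indj+\po/4}$, the two indices lie in $[\po/4,3\po/4)\subseteq[0,\po)$, and their difference is again $\indi-\indj$, so the conditions ``$\indi=\indj$'' and ``$3\mid(\indi-\indj)$'' and the integer $\ord_2(\indi-\indj)$ are unchanged; hence $N$ satisfies (1) and (2) of Theorem~\ref{entries}, i.e.\ is a special $\po/2$-matrix (here one uses that differences of indices in $[0,\po/2)$ have $\ord_2\le\ord_2(\po)-2$, a range on which the values $c(\,\cdot\,)$ of $M$ are known to be non-zero). For part~(2) I handle each of the four claimed identities by computing the offset $s$ between the two index-difference functions: for $M_1=M_8$ one has $s=0$ (translation invariance along diagonals), for $M_2=M_7$ one has $s=3\po/4$, for $M_4=M_5$ one has $s=-3\po/4$, and for $M_3=M_6$ one has $s=3\po/2$. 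In each case $3\mid s$ and $\ord_2(s)\in\{\ord_2(\po)-2,\ \ord_2(\po)-1\}$; and whenever the relevant index difference $d$ satisfies $3\mid d$ (otherwise both matching entries are $0$), its range forces $\ord_2(d)<\ord_2(s)$ — for $M_2,M_4$ one has $|d|<3\po/4$, hence $\ord_2(d)\le\ord_2(\po)-3$, while for $M_3$ one has $|d|<\po$, hence $\ord_2(d)\le\ord_2(\po)-2$. By the shift principle $\psi(d)=\psi(d+s)$, so the two blocks coincide entry by entry.

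For part~(3), the diagonal entries of $M_1+M_3$ are $m_{\indi,\indi}+m_{\indi,\,3\po/4+\indi}=0+c(\ord_2(3\po/4))=c(\ord_2(\po)-2)$, the same non-zero scalar for every $\indi$ (and defined precisely because $\po\ge 4$). Off the diagonal, for $\indi\ne\indj$ one has $(M_1+M_3)_{\indi,\indj}=\psi(\indi-\indj)+\psi\big((\indi-\indj)-3\po/4\big)$; here $|\indi-\indj|<\po/4$, so if $3\mid(\indi-\indj)$ then $\ord_2(\indi-\indj)\le\ord_2(\po)-3<\ord_2(3\po/4)$ and the shift principle makes the two summands equal, hence their sum $0$ in characteristic two, while if $3\nmid(\indi-\indj)$ both summands are already $0$. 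Thus $M_1+M_3=c(\ord_2(\po)-2)\cdot I$ is a non-zero scalar matrix. The one genuinely delicate point of the whole argument — and the place where a careless estimate would produce a false statement — is the verification that $\ord_2(d)<\ord_2(s)$ really holds in every case where the entry does not already vanish; this works because non-vanishing forces $3\mid d$, which rules out exactly the dangerous differences $d=\pm\po/4$ and $d=\pm\po/2$ at which $\ord_2(d)$ would be as large as $\ord_2(s)$.
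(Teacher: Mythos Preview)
Your proof is correct and follows essentially the same approach as the paper's: both arguments reduce to the observation that the entries depend only on $\ord_2(i-j)$ and that a shift by $s$ with $3\mid s$ and $\ord_2(s)>\ord_2(d)$ leaves $\ord_2(d)$ and the mod-$3$ class unchanged. The paper only writes out part~(3) in detail and declares (1) and (2) ``equally easy''; your version is more thorough in that it spells out all three parts via your packaged ``shift principle,'' but the underlying mechanism is identical.
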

\begin{proof}
Let $Q=4q$, and suppose $ 0 \leq i,j<q$. When $i  \neq j \mod 3$, $m_{i,j} =m_{i,j+3q} =0$ and so
the $(i,j)$ entry of $M_1+M_3$ is $0$. When $i \equiv j \mod 3$ but  $i \neq j$, $\operatorname{ord}_2 (q)> \operatorname{ord}_2 (i-j)$, $ \operatorname{ord}_2 (i-j) = \operatorname{ord}_2 (i-j-3q)$ and Definition \ref{specialmatrix} tells us that the $(i,j)$ entry, $m_{i,j}+ m_{i,j+3q}$, of $M_1+M_3$ is $0$. Finally, a diagonal element of $ M_1+M_3$ has the form $m_{i,i} +
m_{i,i+3q} =  m_{i,i+3q}$, which is independent of $ i$. This gives (3). The proofs of (1)
and (2) are equally easy.
\end{proof}

\begin{theorem}
\label{nullity}
A special $\po$-matrix has nullity two.
\end{theorem}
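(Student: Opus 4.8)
The plan is to prove this by induction on $\po$, with Lemma \ref{twentythree} carrying essentially all the weight. The base case $\po = 2$ is immediate: a special $2$-matrix has vanishing diagonal by condition (1) of Definition \ref{specialmatrix}, and its two off-diagonal positions have indices $0$ and $1$, which are not congruent mod $3$, so (1) forces the off-diagonal entries to vanish too; hence $M$ is the $2 \times 2$ zero matrix, of nullity $2$.

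For the inductive step I would take $\po \geq 4$, assume the statement for $\po/2$, and write a special $\po$-matrix $M$ in the $3 \times 3$ block form of Lemma \ref{twentythree}. Using the equalities $M_1 = M_8$, $M_3 = M_6$, $M_2 = M_7$, $M_4 = M_5$ furnished by that lemma, $M$ has the shape
$$M = \begin{pmatrix} A & C & B \\ D & N & D \\ B & C & A \end{pmatrix},$$
where $A$ and $B$ are the $\frac{\po}{4} \times \frac{\po}{4}$ corner blocks, $N$ is a special $\frac{\po}{2}$-matrix, and $A + B = \lambda I$ for some $\lambda \neq 0$. I would then write a vector of $\field^{\po}$ as $(u, w, v)$ with $u, v \in \field^{\po/4}$ and $w \in \field^{\po/2}$, and read off the three block rows of $Mx = 0$, namely $Au + Cw + Bv = 0$, $Du + Nw + Dv = 0$, $Bu + Cw + Av = 0$. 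Adding the first and third rows and using $\Char \field = 2$ gives $(A + B)(u + v) = \lambda(u + v) = 0$, so $u = v$; substituting $v = u$ then makes the two $D$-terms in the middle row cancel, leaving $Nw = 0$, and turns the first row into $u = \lambda^{-1} Cw$. Hence $w \mapsto (\lambda^{-1}Cw,\, w,\, \lambda^{-1}Cw)$ is a linear isomorphism from the kernel of $N$ onto the kernel of $M$, and the induction hypothesis gives that $M$ has nullity $2$.

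I do not expect a genuine obstacle here: once Lemma \ref{twentythree} is in hand, the inductive step is forced. The only things that need care are the block bookkeeping and the two uses of characteristic $2$ — the cancellation $Du + Dv = 0$ and the passage from $(A + B)(u + v) = 0$ to $u = v$, the latter also relying on the fact that $M_1 + M_3$ is a \emph{non-zero} scalar matrix (so $\lambda$ is invertible) rather than merely a scalar one. It is also worth confirming at the outset that $\po \geq 4$ is precisely the range where Lemma \ref{twentythree} applies, so that the induction terminates correctly at $\po = 2$.
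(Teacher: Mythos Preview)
Your proof is correct and follows essentially the same inductive strategy as the paper: base case $\po=2$ is the zero matrix, and for $\po\ge 4$ you exploit the block symmetry and the invertibility of $M_1+M_3$ from Lemma~\ref{twentythree} to reduce to the kernel of $N$. The only cosmetic difference is that the paper carries this out via elementary row and column operations on $M$ to reach a block form $\left(\begin{smallmatrix}0&0&M_1+M_3\\0&N&0\\M_1+M_3&0&0\end{smallmatrix}\right)$, whereas you solve the system $Mx=0$ directly and exhibit an explicit isomorphism $\ker N\to\ker M$; these are two renderings of the same linear-algebra reduction.
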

\begin{proof}
In a slightly different notation this is the second half of Theorem
2.4 of
\cite{monskyhilbertkunzpointquartic}.
We repeat the inductive proof given there. When $\po=2$,
$M= \left(  \begin{array}{cc}                       0 & 0 \\       0 & 0 \\  \end{array}   \right)
$.
When $\po \geq 4$, Lemma \ref{twentythree}
allows us to write
$$M= \left(
       \begin{array}{ccc}
         M_1 & D & M_3 \\
         E & N & E \\
         M_3 & D & M_1 \\
       \end{array}
     \right) \,.
$$
Making elementary row and column operations we get the matrix
$$ \left(
       \begin{array}{ccc}
         M_1 & D & M_1+M_3 \\
         E & N & 0 \\
         M_1 + M_3 & 0 & 0 \\
       \end{array}
     \right) \, .
$$
Since $M_1 +M_3$ is a non-zero scalar matrix, further elementary
operations yield
$$ \left(
       \begin{array}{ccc}
         0 & 0 & M_1+M_3 \\
         0 & N & 0 \\
         M_1 + M_3 & 0 & 0 \\
       \end{array}
     \right) \, .
$$
So $M$ and $N$ have the same nullity, and we use induction.
\end{proof}

\begin{theorem}
\label{final}
If $M$ is a special $\po$-matrix, then $\left(
   \begin{array}{c}
     M \\
     B(\po) \\
     C(\po) \\
   \end{array}
 \right)
$
has rank $\po$.
This implies that $\spax \to \spay \oplus\ringo_{12\po-3}$ is one to one,
that $\elerep \notin \equa \ringo$, and that
$(\varx\vary) f^\po$ is not in $(\varx^{4\po},\vary^{4\po},\varz^{4\po}, \equalg)$.
\end{theorem}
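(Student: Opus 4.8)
The plan is to prove the rank statement by linear algebra over $\field$ and then deduce the three consequences formally. First I would reduce the rank claim to a $2\times 2$ determinant: since $\dim_{\field}\spax=\po$, the matrix $\left(\begin{smallmatrix}M\\B(\po)\\C(\po)\end{smallmatrix}\right)$ has rank $\po$ if and only if no nonzero column vector lies simultaneously in $\ker M$ and in the kernels of the functionals $B(\po)$ and $C(\po)$. By Theorem \ref{nullity} the kernel $\ker M$ is a plane, so this is equivalent to $B(\po)$ and $C(\po)$ restricting to linearly independent functionals on $\ker M$; that is the entire content.

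To control $\ker M$ I would use the reduction modulo $3$: by Theorem \ref{entries}(1), $m_{\indij}=0$ whenever $\indi\not\equiv\indj\bmod 3$, so $M$ is block-diagonal for the partition of $\{0,\dots,\po-1\}$ into residue classes modulo $3$, and $\ker M$ splits accordingly. After reindexing, each diagonal block is a symmetric zero-diagonal matrix whose $(\indi,\indj)$-entry with $\indi\neq\indj$ depends only on $\ord_2(\indi-\indj)$ and is nonzero (Theorem \ref{entries}(2)); over a field of characteristic two such a matrix is nonsingular when its size is even and has a one-dimensional kernel when its size is odd --- a fact of the same elementary inductive flavour as Theorem \ref{nullity}, obtained from the block-splitting of Lemma \ref{twentythree}. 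Since $\po$ is a power of two, together with Theorem \ref{nullity} this forces exactly two of the three residue blocks to be singular. Writing $\ker M=\field w_{1}\oplus\field w_{2}$ with $w_{i}$ supported on the $i$-th singular residue block, a short residue computation shows that the class $2\po-1\bmod 3$ that $B(\po)$ selects indexes (say) the first of these blocks. Then on $\ker M$ the functional $B(\po)$ sends $xw_{1}+yw_{2}$ to $xs_{1}$, while the all-ones functional $C(\po)$ sends it to $xs_{1}+ys_{2}$, where $s_{i}\in\field$ is the sum of the coordinates of $w_{i}$; the two functionals are independent precisely when $s_{1}s_{2}\neq 0$.

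The main obstacle is exactly that: each singular (odd-size) residue block has a kernel generator with nonzero coordinate sum. I expect to prove this by making the generator explicit --- running the recursion that shows these blocks singular produces, up to a scalar, a kernel vector whose coordinate sum is a product of nonzero entries of $M$, hence nonzero in $\field$. (Alternatively the whole statement can be proved by a single induction on $\po$ parallel to the proof of Theorem \ref{nullity}: writing $M$ in the block form of Lemma \ref{twentythree} and applying the same row and column operations reduces $\left(\begin{smallmatrix}M\\B(\po)\\C(\po)\end{smallmatrix}\right)$ to $\po/2$ trivial columns alongside $\left(\begin{smallmatrix}N\\\beta\\C(\po/2)\end{smallmatrix}\right)$, where $N$ is the special $\po/2$-matrix of Lemma \ref{twentythree} and $\beta$ is an explicit combination of the three blocks of $B(\po)$ with the off-diagonal block $D$; the point is that $\beta-B(\po/2)$ lies in the row space of $\left(\begin{smallmatrix}N\\C(\po/2)\end{smallmatrix}\right)$, so the inductive hypothesis closes the argument.)

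Finally the consequences follow formally. By Theorem \ref{matrixdescript} the matrix $\left(\begin{smallmatrix}M\\B(\po)\\C(\po)\end{smallmatrix}\right)$ represents, in suitable bases, the map $\spax\to\spay\oplus\ringo_{12\po-3}$, $\eleu\mapsto(\eleu\equa,\eleu\elerep)$; having rank $\po=\dim\spax$ means exactly that this map is injective, whence Lemma \ref{lemmalong}(3) gives $\elerep\notin\equa\ringo$. Unwinding the definitions ($\ringo=\ringpoly/\maxfropof$ with $\pof=4\po$, $\equa=\equalg$, and $\elerep$ the class of $(\varx\vary)\ele^{\po}$ since $\ele=\elecu$), the statement $\elerep\notin\equa\ringo$ is precisely $(\varx\vary)\ele^{\po}\notin(\varx^{4\po},\vary^{4\po},\varz^{4\po},\equalg)$; combined with Theorems \ref{testideal} and \ref{inclusiontrans} and Proposition \ref{deform}, this completes the proof of Theorem \ref{complete}.
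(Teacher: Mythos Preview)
Your parenthetical ``alternative'' is exactly the paper's argument, and it is cleaner than you suggest. The point you missed is that $B(Q)$ and $C(Q)$ themselves have the same $Q/4,\,Q/2,\,Q/4$ block structure as $M$: one checks directly (using $3\mid 3Q/4$ and $2Q-1-Q/4\equiv Q-1\bmod 3$) that
\[
B(Q)=\bigl(B(Q/4)\,\big|\,B(Q/2)\,\big|\,B(Q/4)\bigr),\qquad
C(Q)=\bigl(C(Q/4)\,\big|\,C(Q/2)\,\big|\,C(Q/4)\bigr).
\]
Hence after adding the first column block to the third, the $B$- and $C$-rows acquire a zero in the third block, and the remaining operations (which use only that third column block and the third row block) leave the middle block of these rows untouched. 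So your $\beta$ is literally $B(Q/2)$; there is no ``combination with $D$'' and no need to argue that $\beta-B(Q/2)$ lies in any row space. With this observation the induction closes immediately, the base case $Q=2$ being $B(2)=(1,0)$, $C(2)=(1,1)$.

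Your primary approach via the mod-$3$ block decomposition is a genuinely different route, but as written it has two real gaps. First, the assertion that each residue block is nonsingular when its size is even and has one-dimensional kernel when its size is odd is not covered by Lemma~\ref{twentythree} or Theorem~\ref{nullity}: those results concern $Q\times Q$ matrices with $Q$ a power of two and use the $Q/4,\,Q/2,\,Q/4$ splitting, whereas your residue blocks have sizes like $(Q\pm 1)/3$ or $(Q+2)/3$, for which no such recursion is available. Second, even granting the nullity claim, the crux is that each odd block's kernel generator has nonzero coordinate sum (your $s_1,s_2\neq 0$); ``running the recursion'' does not establish this, and you have not exhibited the generator or its sum. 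Both points would need genuine new arguments before this line could replace the induction above. Your deduction of the three consequences from the rank statement, via Theorem~\ref{matrixdescript} and Lemma~\ref{lemmalong}(3), is correct.
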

\begin{proof}
This is a variation on the proof of Theorem \ref{nullity}.
Since $B(2)$ and $C(2)$ are $(1,0)$ and $(1,1)$ the result holds for
$\po=2$. Now suppose $\po \geq 4$ and write $M$ as
$$\left(
       \begin{array}{ccc}
         M_1 & D & M_3 \\
        E & N & E \\
         M_3 & D & M_1 \\
       \end{array}
     \right) \,.
$$
Set $C(1)=(1)$, $B(1)=(0)$. Evidently $C(\po)=(C(\po/4)|  C(\po/2) | C(\po/4)   )$
and it's easy to see that
$B(\po)=(B(\po/4)|  B(\po/2) | B(\po/4))$.
For example, $\coefb_{\indi+\po/4} =1$ if and only if
$\indi +\po/4 \equiv 2\po -1 $ mod $3$, i.e. if and only if
$\indi \equiv 2(\po/2)-1$ mod $3$. Making the same elementary row
and column operations on
$\left(
   \begin{array}{c}
     M \\
     B(\po) \\
     C(\po) \\
   \end{array}
 \right)
 $ as we made on $M$ in the proof of Theorem \ref{nullity},
 we get the matrix
 $$ \left(
      \begin{array}{ccc}
        M_1 & D & M_1 +M_3 \\
        E & N & 0 \\
        M_1 +M_3 & 0 & 0 \\
        B(\po/4) & B(\po/2) & 0 \\
        C(\po/4) & C(\po/2) & 0 \\
      \end{array}
    \right)
\, . $$
Using the fact that $M_1+M_3$ is a non-zero scalar matrix we make
further elementary operations yielding

 $$ \left(
      \begin{array}{ccc}
        0 & 0 & M_1 +M_3 \\
        0 & N & 0 \\
        M_1 +M_3 & 0 & 0 \\
        0 & B(\po/2) & 0 \\
        0 & C(\po/2) & 0 \\
      \end{array}
    \right)
\, . $$
The rank of this matrix is $2 \cdot(\po/4) + {\rm rank} \left(
   \begin{array}{c}
     N \\
     B(\po/2) \\
     C(\po/2) \\
   \end{array}
 \right)$
and induction completes the proof.
The other conclusions follow from Theorem \ref{matrixdescript},
Lemma \ref{lemmalong}(3), and the definition of $\elerep$.
\end{proof}

\section{Some consequences and remarks}
\label{sectionremark}

\begin{remark}
In
$\ringglobal =
\fieldclo[\varxyz, \vartrans]/(\equa_\vartrans)$, an $\sysmult$-multiple of
$\ele= (\vary \varz)^3$ is not in the
tight closure of
$(\varx^4,\vary^4,\varz^4)$, and so is not in the plus closure of
$(\varx^4,\vary^4,\varz^4)$ either. Since plus closure commutes with localization,
$\ele$ is not in the plus closure of $(\varx^4,\vary^4,\varz^4)$
in $\ring_\vartrans =
\fieldclo(\vartrans)[\varxyz]/(\equa_\vartrans)$, though
it is in the tight closure. So Hochster's tantalizing question has a
negative answer even in dimension two.

This means also that in the theorem that tight closure equals plus closure
for homogeneous primary ideals in a two-dimensional standard-graded
domain over a finite field
\cite[Theorem 4.1]{brennertightplus} we cannot drop the last assumption.
It also implies that in the sequence of vector
bundles
$$\Syz(\varx^{4\po},\vary^{4\po},\varz^{4\po})(6\po)=\Frobitpb
(\Syz(\varx^{4},\vary^{4},\varz^{4})(6)),\, \, \po=2^\expoit \, ,$$
(which are strongly semistable and of degree $0$) on $\Proj \ring_\vartrans$ there are no repetitions of isomorphism types.
\end{remark}

\begin{remark}
Our example has the following implication on the behavior of the
cohomological dimension under equicharacteristic deformations.
For this we consider the open subset
$$\open\!
=\!D(\varx,\vary,\varz)\! \subset\! \Spec \algforc\! \to\! \Spec \ringglobal\! \to\! \AA^1,
\algforc \!
= \!\fieldclo [\vartrans,\varxyz, \varu,\varv,\varw]/
(\equa_\vartrans, \varu\varx^4 + \varv\vary^4 +\varu\varz^4 +\vary^3\varz^3)$$
($\algforc$ is the forcing algebra for the given ideal generators
and the given element; see \cite{hochstersolid} for the definition of forcing algebras and its relation to solid closure).
Then the open subsets $\open_\point$ in the fibers $\Spec
\algforc_\point$, $\point \in \AA^1$, are affine schemes for all closed
points $\point$
(corresponding to algebraic values $\alg \in \fieldclo $),
but this is not the case when the closed point is replaced by the
generic point.
This follows from the cohomological criterion for tight closure and from our example. We do not know
whether such a deformation behavior of cohomological dimension is
possible in characteristic zero.
\end{remark}

\begin{remark}
\label{arithremark}
The geometric deformations should be seen in analogy with arithmetic
deformations of tight closure. For arithmetic deformations the base
space is not $\Spec \fieldclo[\vartrans]$, but $\Spec \ZZ$ (in the
simplest case).
It was shown in
\cite{brennerkatzmanarithmetic} for the ideal $\ideal=(\varx^4,\vary^4,\varz^4)$ and $\ele
=\varx^3 \vary^3$ in $\ringglobal=\ZZ[\varx,\vary,\varz]/(\varx^7+\vary^7-\varz^7)$ that
the containment $\ele \in \ideal^*$ holds in
$\ringglobal_{\ZZ/(\numprim)}=
\ringglobal
\tensor_\ZZ
(\ZZ/(\numprim))$ for $\numprim=3 \modu 7$ and does not hold for $\numprim=2 \modu 7$.
This answered negatively another old question of tight closure
theory and was a guide in constructing our counterexample to the
localization property.
\end{remark}

\begin{remark}
There are probably also similar examples in higher
characteristics. For ${\rm char}(\field)=3$, the second author has
shown in \cite[Theorem 3]{monskyhilbertkunzzd} that the Hilbert-Kunz
multiplicity of $\ring_\alg=\field[\varxyz]/(\equa_\alg)$, where
$\equa_\alg= \varz^4-\varx\vary(\varx+\vary)(\varx+\alg\vary)$ and
$\alg \in \field$, $\alg \neq 0$ or $1$,
is $>3$ or is $3$ according as
$\alg$ is algebraic or transcendental over
$\ZZ/(3)$. However, one can not use the ideals
$(\varx^{3^\expoe}, \vary^{3^\expoe},\varz^{3^\expoe})$ directly, because the
degree bound $ 3 \cdot 3^\expoe/2$ is not an integer. But one can
look for finite ring extensions $\ring_\alg \subseteq \ringsec_\alg$
where there are elements having the critical degree. For example,
look at the ring homomorphism
$$\field[\varxyz,
\vartrans]/(\varz^4-\varx\vary(\varx+\vary)(\varx+\vartrans \vary))
\longto
\field[\varu, \varv, \varw,
\vartrans]/(\varw^8-(\varu^4+\varv^4)(\varu^4+\vartrans \varv^4)) =
\ringglobalsec
$$
given by $\varx \mapsto \varu^4$, $\vary \mapsto \varv^4$ and $\varz \mapsto \varu\varv\varw^2$.
Then the image ideal of $(\varxyz)$ is $\idealsec=( \varu^4,\varv^4, \varu \varv
\varw^2)$, and the stability properties of the syzyzgy bundles
$\Frobitpb (\Syz( \varu^4,\varv^4, \varu \varv\varw^2 ))$
on the projective curves $\Proj \ringsec_\alg$ reflect the stability
properties of
$\Frobitpb (\Syz( \varx,\vary, \varz))$ on
$\Proj \ring_\alg$
(in particular, they depend on whether $\alg$ is algebraic or transcendental).
Therefore every element in $\ringglobalsec$ of degree $6$
(like $\varu^3\varv^3$) lies in $\idealsec^*$ in the generic fiber ring
$\ringsec_\vartrans$, but this might again not be true in the fiber rings over the closed points.
\end{remark}


\begin{remark}
The example has no bearing on the existence of a tight-closure type operation in characteristic zero. For example, solid closure (in characteristic zero) behaves well under geometric
deformations in the graded case in dimension two, since it can be
characterized in terms of the Harder-Narasimhan filtration of the
syzygy bundle (see \cite[Theorem 2.3]{brennertightplus}), and since semistability is an
open property.
\end{remark}

\end{document}